\newtheorem{thm}{Theorem}[section]
\newtheorem{theorem}{Theorem}[section]
 \newtheorem{lemma}[thm]{Lemma}
 \newtheorem{prop}[thm]{Proposition}
\def \l {\left}
\def \r {\right}
\def \l {\left}
\def \r {\right}
\def \bx {{\bf x}}
\def\proof{\noindent \textsc{Proof:}\ }
\def\endproof{$\Box$ \medskip}
\def\Fqsq{\mathbb{F}_{q^2}}
\def\calK{\mathcal{K}}
\def\calP{\mathcal{P}}
\def\calO{\mathcal{O}}
\def\FF{\mathbb{F}}
\def\RR{\mathbb{R}}
\def\real{\mathbb{R}}
\def\ZZ{\mathbb{Z}}
\def\zed{\mathbb{Z}}
\def\Aut{\operatorname{Aut}}
\def\GL{\operatorname{GL}}
\def\SL2{\mathrm{SL}_{2}}
\def\proof{\noindent \textsc{Proof:}\ }
\def\endproof{ \hfill \parbox{0.5cm}{$\Box$}}
\newcommand{\rmv}[1]{}
\def \spn {\mathrm{span}}
\def \Div {\mathrm{Div}}
\newcommand{\nH}{Q}
\newcommand{\si}{\sigma}
\newcommand{\1}{{\bf 1}}
\newcommand{\2}{{\bf 2}}
\newcommand{\3}{{\bf 3}}
\newcommand{\4}{{\bf 4}}
\newcommand{\5}{{\bf 5}}
\newcommand{\6}{{\bf 6}}
\newcommand{\ta}{\{\sigma_1\}}
\begin{document}

\title{Lattices from Hermitian function fields}
\author[A. B\"ottcher]{Albrecht B\"ottcher}
\author[L. Fukshansky]{Lenny Fukshansky}
\author[S. R. Garcia]{\\ Stephan Ramon Garcia}
\author[H. Maharaj]{Hiren Maharaj}\thanks{Fukshansky acknowledges support by Simons Foundation grant \#279155, Garcia acknowledges support by NSF grant DMS-1265973}
\rmv{
\address{Fakult\"at f\"ur Mathematik, TU Chemnitz, 09107 Chemnitz, Germany,}
\email{aboettch@mathematik.tu-chemnitz.de}
\address{Department of Mathematics, Claremont McKenna College, 850 Columbia Ave, Claremont, CA 91711, USA, {\em E-mail address: {\tt lenny@cmc.edu}}}
\email{lenny@cmc.edu}
\address{Department of Mathematics, Pomona College, 610 N. College Ave, Claremont, CA 91711, USA, {\em E-mail address: {\tt stephan.garcia@pomona.edu}}}
\email{stephan.garcia@pomona.edu}
\address{Department of Mathematics, Claremont McKenna College, 850 Columbia Ave, Claremont, CA 91711, USA, {\em E-mail address: {\tt hmahara@g.clemson.edu}}}
\email{hmahara@g.clemson.edu}
}

\subjclass[2010]{Primary: 11H06, Secondary: 11G20}
\keywords{Hermitian curves, function fields, well-rounded lattices, kissing number, automorphism group}

\begin{abstract} We consider the well-known Rosenbloom-Tsfasman function field lattices
in the special case of Hermitian function fields. We show that in this case
the resulting lattices are generated by their minimal vectors, provide
an estimate on the total number of minimal vectors, and derive properties
of the automorphism groups of these lattices. Our study continues previous
investigations of lattices coming from elliptic curves and finite Abelian groups.
The lattices we are faced with here are more subtle than those considered previously, and
the proofs of the main results require the replacement of the existing linear algebra approaches by
deep results of Gerhard Hiss on the factorization of functions with particular divisor support into lines and their inverses.
\end{abstract}

\maketitle

\section{Introduction}
\label{intro}

A lattice is a discrete subgroup in a Euclidean space $\real^n$. Lattice theory aims to understand geometric properties of lattices and to use them for a variety of applications, such as discrete optimization problems or coding theory. Some of the geometrically most interesting lattices, in particular those possessing many symmetries, come from several well-studied algebraic constructions.
These include, for instance, ideal lattice constructions from number fields and polynomial rings; see, e.g.,~\cite{bayer1}, \cite{bayer2} and~\cite{lub_mic}, respectively, for a detailed overview of these constructions.
A series of prominent algebraic constructions of lattices are also presented in~\cite{tv}.
In this paper, we focus our attention on an important algebraic construction, originally introduced by Rosenbloom and Tsfasman in~\cite{rt} and later described in~\cite{tv}, that of {\em function field lattices}.

The construction of function field lattices given in \cite{tv} is as follows. Let $F$ be an algebraic function field (of a single variable) with the finite field $\FF_q$ as its full field of constants, where $q$ is a prime power. Let $\calP = \{ P_0,P_1,P_2,\ldots, P_{n-1}\}$ be the set of rational
places of $F$.  For each place $P_i$, let $v_i$ denote the corresponding normalized discrete valuation and let  $\calO_\calP^*$ be the set of all nonzero functions  $f\in F$ whose divisor has support contained in the set $\calP$. Then $\calO_\calP^*$ is an Abelian group, $\sum_{i=0}^{n-1} v_i(f) = 0$ for each $f \in \calO_\calP^*$, and we  define
$$\deg f := \sum_{v_i(f) > 0} v_i(f) = \frac{1}{2} \sum_{i=0}^{n-1} |v_i(f)|.$$
{ Let
$\varphi_\calP : \calO_\calP^* \to \ZZ^n$ be the group homomorphism given by $$\varphi_\calP(f) = (v_0(f),v_1(f), \ldots, v_{n-1}(f)).$$
Then $L_\calP := \mathrm{Image}(\varphi_\calP)$ is a finite-index sublattice of the root lattice
$$A_{n-1} = \left\{ \bx =(x_0,\ldots,x_{n-1}) \in \zed^n : \sum_{i=0}^{n-1} x_i = 0 \right\}$$
with minimum distance
\begin{equation} \label{mind}
d(L_\calP) \ge \min \left\{  \sqrt{2\deg f} : f \in \calO_\calP^*\setminus\FF_q \right\},
\end{equation}
and
\begin{equation} \label{det}
\det L_\calP \le \sqrt{n} h_F \le \sqrt{n} \left ( 1 + q  + \frac{n-q-1}{g} \right )^g,
\end{equation}
where $g$ is the genus of $F$ and $h_F$ is the divisor class number of $F$, that is, the size of the  group of divisor classes of $F$ of  degree 0, denoted by $\mathrm{Cl}^0(F)$. Here, as in \cite{tv}, we can identify $\ZZ^n$ with the set of all divisors with support in $\calP$ and $A_{n-1}$ with the set of all divisors of degree~$0$. We will often make use of this identification when working with lattice vectors by working with the corresponding divisors instead.

Unless stated otherwise, we will use notation as
in \cite{stich}. We write $F /\mathbb F_q$ to mean that $F$ is a
global function field with full field of constants $\mathbb{F}_q$. Let
$g=g(F)$ denote the genus of $F$.  If $P$ is a rational place of
$F$, that is, a place of degree one, then $v_P$ denotes the
discrete valuation corresponding to $P$.
We write  $\mathrm{supp} \, A$ for the support of a
divisor $A$. The divisor of a
function $f \in F \setminus \{ 0 \}$ is denoted by $(f)$ and the divisor class of a place $P$
by $[P]$.

We will study  lattices from Hermitian function fields.
The Hermitian
function field $\mathbb{F}_{q^2}(x,y)/\mathbb{F}_{q^2}$ has defining equation
$y^q+y=x^{q+1}$.
The purpose of this paper is to show that the lattices which arise from Hermitian function fields
are generated by minimal vectors and are hence well-rounded. Recall that a lattice $L$ of rank $k$ is called well-rounded if it contains $k$ linearly independent minimal vectors, i.e., vectors of Euclidean norm equal to $d(L)$, and that $L$ is said to be generated by minimal vectors if the set of all minimal vectors of $L$ spans $L$ over $\zed$. The statement that $L$ is generated by minimal vectors is equivalent to the statement that $L$ is well-rounded for $k \leq 4$ and is strictly stronger for $k \geq 5$; in other words, there exist well-rounded lattices of rank 5 and greater whose minimal vectors generate a sublattice of index greater than 1. See~\cite{martinet} for further information.

In \cite{bo_et_al}, we studied sublattices $L_G$ of the root lattice $A_{N-1}$
which are of the form
\begin{equation}
L_G=\left\{\bx  =(x_0,\ldots,x_{N-1}) \in A_{N-1}: \sum_{j=1}^{N-1} x_j g_j=0\right\},\label{alb1}
\end{equation}
where $G=\{g_0 := 0,g_1,\ldots,g_{N-1}\}$ is a finite (additively written) Abelian group.
We showed that $\det L_G=N^{3/2}$ and that except for $G=\zed_4$, the lattice $L_G$ always has a basis of minimal vectors and
is hence well-rounded. Here and in what follows, $\zed_m:=\zed/m\zed$.
Such lattices emerge in particular when applying the above construction to elliptic curves over $\mathbb{F}_q$. The groups $G$ coming from elliptic curves were characterized by R\"uck~\cite{Ruck},
and for these groups, the results of ~\cite{bo_et_al} had previously been established by  Min Sha~\cite{sha}.

The only elliptic curve among the Hermitian curves $y^q+y=x^{q+1}$ over $\mathbb{F}_{q^2}$ is the curve
$y^2+y=x^3$ over $\mathbb{F}_4$, which corresponds to $q=2$; see~\cite{fu_ma}. In that case $G$ is isomorphic to $\zed_3^2$, that is, we have $N=9$ and $\det L_G=27$.

Except for the case $q=2$, the Hermitian function fields considered here lead to a class of lattices which
are more general than the lattices~(\ref{alb1}). Namely,
given a finite Abelian group $G$ and a subset $S = \{ g_0 := 0, g_1, \ldots, g_{n-1} \}$
of $G$, we define the sublattice $L_G(S)$ of $A_{n-1}$ by
\[L_G(S) =\left\{\bx=(x_0,\ldots,x_{n-1}) \in A_{n-1}: \sum_{j=1}^{n-1} x_j g_j=0\right\}.\]
It turns out that unless $S = G$, in which case $L_G(G)=L_G$, the situation changes dramatically: there are many $S$
for which $L_G(S)$ is well-rounded and there are many $S$ for which $L_G(S)$ is not
well-rounded. In general it is a delicate problem to decide which of the two possibilities occurs in a concrete case.

As the result of the abstract construction of function field lattices outlined above, we obtain $L_{\calP}=L_G(S)$,
where $S$ is the set $S = \{ [P_i-P_0]: 0\le i \le n-1 \}$ of divisor classes
and $G$ is the subgroup of the divisor class group
$\mathrm{Cl}^0(F)$ generated by $S$. Thus, in this case $S$ is not simply a subset of $G$, but a generating set for $G$.
If the function field is specified to be $\mathbb{F}_{q^2}(x,y)/\mathbb{F}_{q^2}$ with the defining equation
$y^q+y=x^{q+1}$, the group $G$ can be shown to be isomorphic to $\zed_{q+1}^{q^2-q}$, which is just the above $\zed_3^2$ for $q=2$,
and $S$ becomes a set of $q^3+1$ generators of $G$. For $q=2$, $S$ has 9 elements and therefore
coincides with $G$. However, if $q > 2$, then the set $S$ is much smaller than $G$.
In the light of what was said at the end of the preceding paragraph, it is therefore a rather surprising fact
that the lattices $L_{\calP}$ coming from the curves $y^q+y=x^{q+1}$ over $\mathbb{F}_{q^2}$ are always well-rounded
and even more, are generated by their minimal vectors.

To strengthen the surprise and to emphasize the subtlety of the matter we mention the following.
The Klein
curve $K$ is defined by  $$(x + y + 1)^4 + (xy + x + y)^2 + xy(x + y + 1)  =0.$$
Over $\mathbb{F}_4$, the set $\calP$ of $\mathbb{F}_4$-rational points of $K$ contains 14 elements,
and the curve yields a rank 13 lattice $L_\calP$ of dimension 14.
A quick computation using Magma~\cite{magma} shows that
the lattice $L_\calP$ has $168$ minimal vectors. These minimal vectors
generate a sublattice of $L_\calP$ of index $2$. This implies that $L_\calP$
is well-rounded but not generated by the minimal vectors.

This paper is organized as follows. In Section~\ref{herm} we set the basic notation of Hermitian function fields. In Section~\ref{divisors} we give a detailed characterization of divisors coming from lines in a Hermitian function field. We derive formulas for the minimal distance and determinant of lattices coming from Hermitian function fields via the above construction in Sections~\ref{min_dist} and~\ref{det_sect}, respectively. In Section~\ref{gen_min} we establish our main result, which asserts that these lattices are generated by their minimal vectors. Our proof makes essential use of the results of Hiss~\cite{gh}. We do not know of a proof along the linear algebra approaches developed in~\cite{bo_et_al} and~\cite{sha}. In Section~\ref{automorphisms}
we investigate properties of automorphism groups of these lattices, as well as more general lattices coming from generating sets in Abelian groups. Finally, in Section~\ref{number_min_vecs} we establish a lower bound on the number of minimal vectors of lattices from Hermitian function fields, which is the same as the kissing number of these lattices.

\section{Hermitian  function fields: basic facts}
\label{herm}

The following are some basic facts about these function fields. Let
$H$ denote $\mathbb{F}_{q^2}(x,y)$  with the defining equation
$y^q+y=x^{q+1}$ over $\mathbb{F}_{q^2}$. Thus, we use $H$ for the function field $F$ in the  Rosenbloom-Tsfasman construction outlined
above.
The genus of $H$ is $g=\frac{q(q-1)}{2}$ and $H$ has $n = q^3+1$ places of degree $1$ over $\Fqsq$, namely
\begin{itemize}
\item the common pole $Q_\infty$ of $x$ and $y$, and \item for
each $\alpha \in \Fqsq$, there are $q$ elements $\beta \in \Fqsq$
such that $\beta^q+\beta=\alpha^{q+1}$, and for
 each such pair
$(\alpha,\beta)$ there is a unique place $P_{\alpha,\beta}$ of $H$
of degree one with $x(P_{\alpha,\beta})=\alpha$ and
$y(P_{\alpha,\beta})=\beta$.
\end{itemize}
 Let
$$
\mathcal{K} :=\{ (\alpha, \beta) \in \mathbb{F}_{q^2}^2 :
\beta^q+\beta=\alpha^{q+1}\}.
$$
We let $\calP$ stand for the set of rational places of $H$:
 the place $Q_\infty$  and the places} $P_{\alpha,\beta}$  indexed by $(\alpha,\beta) \in \calK$.
For
each $(\alpha,\beta)\in \mathcal{K}$, set
$$\tau_{\alpha,\beta}:= y-\beta -
\alpha^q(x-\alpha). $$
Observe that $\tau_{\alpha,\beta} = y - \alpha^qx + \beta^q$ and note
that $\tau_{\alpha,\beta}$ is the tangent line to the Hermitian curve
at the point $(\alpha,\beta)$.
 If one views $H$ as a
Kummer extension over $\mathbb{F}_{q^2}(y)$, the rational places of
$\mathbb{F}_{q^2}(y)$ behave as follows:
\begin{itemize}
\item For each $\gamma \in \mathbb{F}_{q^2}$ such that $\gamma^q + \gamma =
0$, the place $y-\gamma$ is totally ramified.  If $\gamma^q +
\gamma \ne 0$, the place $y-\gamma$ splits completely in $H$.
 \item The pole of $y$ is totally ramified.
\end{itemize}
 We remark  that
\begin{equation}\label{taux}
\tau_{\alpha,\beta}^q+\tau_{\alpha,\beta}=(x-\alpha)^{q+1}.
\end{equation}
 We therefore have $H= \mathbb{F}_{q^2}(x,y) = \mathbb{F}_{q^2}(\tau_{\alpha,\beta}, x)$, and so we may
view $H$ as a Kummer extension of $\Fqsq(\tau_{\alpha,\beta})$.   It follows that
 the divisor of
$\tau_{\alpha,\beta}$ is
$$
\left( \tau_{\alpha,\beta} \right) =(q+1)P_{\alpha,\beta}-(q+1)Q_\infty.
$$
Following the usual convention for rational function fields, we
denote the places of $\mathbb{F}_{q^2}(\tau_{\alpha,\beta})$ by their corresponding monic
irreducible polynomials, except in the case of the  place at
infinity,  which we denote by $P_\infty(\tau_{\alpha,\beta})$. For any $\gamma
\in \mathbb F_{q^2}$ satisfying $\gamma^q + \gamma = 0$, we have
$\tau_{\alpha,\beta} - \gamma = \tau_{\alpha, \beta+\gamma}$.  Thus, we will
write ``the place $\tau_{\alpha, \beta+\gamma}$ in
$\mathbb{F}_{q^2}(\tau_{\alpha,\beta})$'' to mean the place $\tau_{\alpha,\beta} - \gamma $.

\section{Divisors of lines}
\label{divisors}

We will call functions of the form $ax+by+c$ ($a,b,c\in \Fqsq$ with not both $a,b$ zero) lines.
Furthermore, by points on the line $ax+by+c$ we mean the points of intersection of the line
$ax + by + c$ with the Hermitian curve $y^q + y = x^{q+1}$.
In the next result we determine the divisor of every line and thus   obtain
the points (of $\calK$) which lie on a line.

\begin{lemma} \label{tauram}
Let $H/\mathbb{F}_{q^2}$ denote a Hermitian function field and let
$\gamma\in\mathbb{F}_{q^2}$.

{\rm (a)}
If $\gamma^q + \gamma = 0$, the place $\tau_{\alpha,\beta} - \gamma =
\tau_{\alpha, \beta+\gamma}$ in $\mathbb{F}_{q^2}(\tau_{\alpha,\beta})$ is totally
ramified in the extension $H/\mathbb{F}_{q^2}(\tau_{\alpha,\beta})$. The divisor of
$\tau_{\alpha,\beta} - \gamma$ is
$$(\tau_{\alpha,\beta} - \gamma) = (q+1)P_{\alpha,\beta+\gamma} - (q+1)Q_\infty.$$
The line $\tau_{\alpha,\beta} - \gamma$ is a tangent line.

{\rm (b)} The pole $P_\infty(\tau_{\alpha,\beta})$ of $\tau_{\alpha,\beta}$ is totally ramified
in the extension $H/\mathbb{F}_{q^2}(\tau_{\alpha,\beta})$.

{\rm (c)} If $\gamma^q + \gamma \ne 0$, the place $\tau_{\alpha,\beta} - \gamma $
in $\mathbb{F}_{q^2}(\tau_{\alpha,\beta})$ splits completely in the extension
$H/\mathbb{F}_{q^2}(\tau_{\alpha,\beta})$ and the divisor of
$\tau_{\alpha,\beta} - \gamma$ is
\begin{equation}
 \sum_{i=0}^q  P_{\alpha+\delta \zeta^i,\beta + \gamma + \alpha^q \delta \zeta^i}
-(q+1)Q_\infty
\end{equation}
where $\zeta$ is a primitive $(q+1)$st root of unity in $\mathbb{F}_{q^2}$ and
$\delta\in \mathbb{F}_{q^2}^*$ is  such that
$\gamma^q+\gamma = \delta^{q+1}$.
The points of $\calK$ which lie on the line $\tau_{\alpha,\beta} - \gamma$ are precisely
$$(\alpha+\delta \zeta^i,\beta + \gamma + \alpha^q \delta \zeta^i)\;\;(0\le i \le q).$$
The line $\tau_{\alpha,\beta} - \gamma$ is not a tangent line.

{\rm (d)} Suppose that $f = y+bx + c$. Let $\delta\in\Fqsq$ be such that
$\delta^{q+1} = b^{q+1} -(c^q + c)$. Then the points of $\calK$ which lie
on the line $f$ are precisely
$$(-b^q +\delta \zeta^i,b^{q+1}-c -b \delta \zeta^i) \; \; (0\le i \le q).$$
It follows that $f$ is a tangent line if and only if $\delta=0$ if and only if $(-b^q,c^q) \in \calK$ {\rm (}if and only if
$(-b,c) \in \calK\,${\rm )}. If $f$ is a tangent line then
$f= \tau_{-b^q,c^q}$. If $\delta \ne 0$, then $f$ contains exactly $q+1$ points from~$\calK$.

{\rm (e)} Suppose that $f= x-c$. Then the divisor of $f$ is
\begin{equation}
 \sum_{d} P_{c,d} - qQ_\infty
\end{equation}
where the sum is over the $q$ solutions $d \in \mathbb{F}_{q^2}$ to $d^q+d= c^{q+1}$.
\end{lemma}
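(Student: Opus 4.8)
The plan is to compute the divisor $(x-c)$ by hand, locating its pole and its zeros separately and then matching their degrees; this uses only the facts recorded in Section~\ref{herm} together with identity~\eqref{taux}, and needs no new machinery.

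First I would pin down the pole. Since $Q_\infty$ is the common pole of $x$ and $y$ and is the only place at which $x$ has a pole (every $P_{\alpha,\beta}$ has $x(P_{\alpha,\beta})=\alpha\in\mathbb{F}_{q^2}$ finite), and since $c\in\mathbb{F}_{q^2}$ is a constant, the functions $x$ and $x-c$ have the same pole divisor, supported on $Q_\infty$ alone. To evaluate $v_{Q_\infty}(x)$ I would feed the known divisor $(\tau_{\alpha,\beta})=(q+1)P_{\alpha,\beta}-(q+1)Q_\infty$ into~\eqref{taux}. Since $v_{Q_\infty}(\tau_{\alpha,\beta})=-(q+1)<0$, the ultrametric inequality gives $v_{Q_\infty}\bigl(\tau_{\alpha,\beta}^q+\tau_{\alpha,\beta}\bigr)=q\,v_{Q_\infty}(\tau_{\alpha,\beta})=-q(q+1)$, while the right-hand side of~\eqref{taux} has $v_{Q_\infty}\bigl((x-\alpha)^{q+1}\bigr)=(q+1)v_{Q_\infty}(x-\alpha)=(q+1)v_{Q_\infty}(x)$. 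Hence $v_{Q_\infty}(x)=-q$, so the pole divisor of $x-c$ is exactly $qQ_\infty$, of degree $q$.

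Next I would identify the zeros. By the enumeration of rational places in Section~\ref{herm}, for $\alpha=c$ there are exactly $q$ elements $d\in\mathbb{F}_{q^2}$ with $d^q+d=c^{q+1}$, and each of them gives a distinct place $P_{c,d}$ with $x(P_{c,d})=c$; hence $v_{P_{c,d}}(x-c)\ge 1$ for each of these $q$ places. Conversely, a zero $P$ of $x-c$ cannot be a pole of $x$, so $P=P_{\alpha,\beta}$ for some $(\alpha,\beta)\in\calK$, and the vanishing of $x-c$ at $P$ forces $\alpha=x(P)=c$; thus the $P_{c,d}$ are the only zeros. Comparing degrees now finishes the proof: $\deg(x-c)_0=\deg(x-c)_\infty=q$, while $\sum_d v_{P_{c,d}}(x-c)$ is a sum of $q$ terms each at least $1$, so every one of these valuations equals $1$ and there are no further zeros. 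Therefore $(x-c)=\sum_d P_{c,d}-qQ_\infty$, as claimed.

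The argument is essentially bookkeeping, and the only place where care is needed is the degree comparison, which rests on having correctly computed $v_{Q_\infty}(x)=-q$. As an alternative route, one could instead observe that $x-c$ splits completely in the degree-$q$ extension $H/\mathbb{F}_{q^2}(x)$ — the polynomial $T^q+T-c^{q+1}$ has $q$ distinct roots in $\mathbb{F}_{q^2}$ since $c^{q+1}\in\mathbb{F}_q$ lies in the image of the additive surjection $z\mapsto z^q+z$ from $\mathbb{F}_{q^2}$ onto $\mathbb{F}_q$ — and then invoke Kummer's decomposition theorem, using that the affine Hermitian curve is smooth (its defining polynomial has nonvanishing $y$-derivative).
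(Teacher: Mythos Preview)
Your proposal addresses only part~(e) of the lemma; I will evaluate that part.

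The argument is correct. One sentence deserves a caveat: when you write ``a zero $P$ of $x-c$ cannot be a pole of $x$, so $P=P_{\alpha,\beta}$ for some $(\alpha,\beta)\in\calK$,'' you are assuming that every zero of $x-c$ is a \emph{rational} place, which is not a priori clear---$H$ has plenty of higher-degree places not of the form $P_{\alpha,\beta}$. However, this sentence is redundant: the degree comparison you carry out immediately afterward ($q$ rational places each contributing at least $1$ to a zero divisor of total degree $q$) already forces these to be all the zeros, each simple, without first having to rule out non-rational zeros. So the logic stands once that stray sentence is removed or rephrased.

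Compared with the paper: the paper dispatches (a), (b), and (e) in a single line by invoking the ramification behavior of $H$ as a degree-$(q{+}1)$ or degree-$q$ extension of the appropriate rational subfield. For~(e) the relevant subfield is $\mathbb{F}_{q^2}(x)$, and the content of the paper's argument is exactly your ``alternative route'': since $T^q+T-c^{q+1}$ has $q$ distinct roots in $\mathbb{F}_{q^2}$, the place $x-c$ splits completely in $H/\mathbb{F}_{q^2}(x)$, while the pole of $x$ is totally ramified below $Q_\infty$. Your main route is genuinely different in flavor: rather than citing a decomposition theorem, you compute $v_{Q_\infty}(x)=-q$ directly from the known divisor of $\tau_{\alpha,\beta}$ and identity~\eqref{taux}, exhibit $q$ rational zeros by hand, and balance degrees. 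This is more self-contained and avoids appealing to Kummer/Artin--Schreier theory as a black box, at the cost of relying on the divisor of $\tau_{\alpha,\beta}$ from Section~\ref{herm}, which was itself obtained via that theory. Either approach is perfectly adequate here; the paper's is terser, yours makes the mechanism more visible.
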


\begin{proof}
Parts $(a), (b)$, and $(e)$ follow from viewing $H$ as a Kummer extension of
$\Fqsq(\tau_{\alpha,\beta})$.

Proof of $(c)$: Since the trace map from
$\mathbb{F}_{q^2}$ to $\mathbb{F}_q$ is given by  $ z\mapsto z^q + z$ and 
the norm map $z\mapsto z^{q+1}$ from $\mathbb{F}_{q^2}^*$ to $\mathbb{F}_q^*$  is surjective,
there exists a $\delta\in \mathbb{F}_{q^2}^*$ such that
$\gamma^q+\gamma = \delta^{q+1}$. Let $\zeta$ be a primitive $(q+1)$st root of unity in
$\mathbb{F}_{q^2}$.  Then the set of all solutions $x$   to $\gamma^q + \gamma = (x-\alpha)^q$
  is given by  $x-\alpha = \delta \zeta^i$, that is, $x= \alpha+ \delta \zeta^i$  ($0\le i \le q$).
Now $\tau_{\alpha,\beta}  = y- \beta - \alpha^q(x-\alpha) =  y - \beta - \alpha^q \delta \zeta^i = \gamma$, and so
$y = \beta + \gamma + \alpha^q \delta \zeta^i$. This proves $(c)$.

Proof of $(d)$:
Let $\alpha = -b^q$. Then  $b = -\alpha^q$.
 If $\beta \in \mathbb{F}_{q^2}$  satisfies
$\beta^q + \beta = \alpha^{q+1} = b^{q+1}$, then
$f= y - \alpha^qx+c  =\tau_{\alpha,\beta} -\gamma$ where $\gamma = \beta^{q} - c$.
  Now observe that $\gamma^q + \gamma = b^{q+1} -(c^q + c) = \delta^{q+1}$.
By $(c)$, it follows that the points on the line $f$ are
$(\alpha+\delta \zeta^i,\beta + \gamma + \alpha^q \delta \zeta^i) = (-b^q +\delta \zeta^i,b^{q+1}-c -b \delta \zeta^i)$, as required.
\end{proof}

\section{The minimum distance of the lattice}
\label{min_dist}

\begin{theorem}\label{mindist}
The minimum distance of the lattice is $\sqrt{2q}$ and the minimum degree of every non-constant function in $\calO_k^*$
is $q$.
\end{theorem}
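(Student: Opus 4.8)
The plan is to compute the minimum degree of a non-constant function in $\calO_k^*$ (which I interpret as $\calO_\calP^*$, the functions whose divisor is supported on the rational places of $H$), since by~\eqref{mind} the minimum distance of $L_\calP$ is at least $\sqrt{2 d}$ where $d$ is this minimum degree, and a function of degree $d$ produces a lattice vector of norm exactly $\sqrt{2d}$ when its divisor has all valuations in $\{-1,0,1\}$. So the two assertions are really one: I must show that the smallest degree of a non-constant $f \in \calO_\calP^*$ is exactly $q$, and exhibit such an $f$ whose divisor realizes the bound $\sqrt{2q}$ in the lattice.

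\emph{Lower bound.} Let $f \in \calO_\calP^* \setminus \Fqsq$ have degree $d = \deg f$. Then the pole divisor $(f)_\infty$ has degree $d$ and is supported on rational places of $H$. I will argue that $d \ge q$. Suppose $d \le q$. The Weierstrass semigroup of $H$ at the rational place $Q_\infty$ is generated by $q$ and $q+1$ (a standard fact for the Hermitian curve), and more generally, for a function with poles only at rational places, one can bound things via the Riemann–Roch / genus inequality. The cleanest route: a non-constant function of degree $d$ gives a map to $\PP^1$ of degree $d$, so $\Fqsq(H) \supseteq \Fqsq(f)$ with $[\,H : \Fqsq(f)\,] = d$; if $d \le q$, then since the number of rational places of $H$ is $q^3 + 1$, each lying over one of the $q^2 + 1$ rational places of $\Fqsq(f) \cong \Fqsq(t)$, and the poles of $f$ number at most $d \le q$, a counting/ramification argument combined with the Hurwitz genus formula will force $g(H) < q(q-1)/2$, a contradiction; alternatively, invoke that the minimum distance of the Hermitian code / the known gonality of the Hermitian curve is $q$. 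I expect the gonality statement — that the $\Fqsq$-gonality of the Hermitian curve equals $q$ — to be the key input, and it is classical (the pencil $|qQ_\infty|$ achieves it, and nothing smaller does, by the semigroup at $Q_\infty$ together with the fact that every rational place has the same Weierstrass semigroup).

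\emph{Upper bound and attainment in the lattice.} By Lemma~\ref{tauram}(e), the function $f = x - c$ has divisor $\sum_d P_{c,d} - q Q_\infty$, where the sum runs over the $q$ distinct solutions $d \in \Fqsq$ of $d^q + d = c^{q+1}$. This is a non-constant function in $\calO_\calP^*$ of degree exactly $q$, so the minimum degree is $\le q$, hence $= q$. Moreover every valuation appearing in $(x-c)$ is $+1$ (at the $q$ places $P_{c,d}$) or $-q$ (at $Q_\infty$); to get an honest lattice vector of norm $\sqrt{2q}$ with all entries in $\{-1,0,1\}$ I instead use the divisor of a tangent line: by Lemma~\ref{tauram}(a), $(\tau_{\alpha,\beta} - \gamma) = (q+1)P_{\alpha,\beta+\gamma} - (q+1)Q_\infty$ has degree $q+1$, which is too big, so I take a quotient of two functions of the form $x - c$: the function $(x - c_1)/(x - c_2)$ has divisor $\sum_d P_{c_1,d} - \sum_{d'} P_{c_2,d'}$, of degree $q$, with all valuations $\pm 1$ at $2q$ distinct rational places and valuation $0$ at $Q_\infty$. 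This yields a lattice vector of squared norm $2q$, so $d(L_\calP) \le \sqrt{2q}$; combined with~\eqref{mind} and the lower bound $d \ge q$, we get $d(L_\calP) = \sqrt{2q}$. The main obstacle is the lower-bound step — ruling out non-constant functions of degree $< q$ — which I would handle by the gonality argument sketched above, citing the Weierstrass semigroup $\langle q, q+1\rangle$ at each rational place of the Hermitian curve.
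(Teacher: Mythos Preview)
Your upper-bound construction via $(x-c_1)/(x-c_2)$ is correct and is essentially the paper's argument (the paper uses two non-vertical lines through a common point instead, but both produce a vector with $q$ ones and $q$ minus ones).

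For the lower bound, however, there is a gap in the argument you say you would rely on. The Weierstrass semigroup $\langle q,q+1\rangle$ at a rational place $P$ only tells you that a function with a pole \emph{solely at $P$} must have pole order $\ge q$. A non-constant $f\in\calO_\calP^*$ may have poles spread over several rational places, each of small order, and the single-point semigroup says nothing about such $f$. So ``citing the Weierstrass semigroup at each rational place'' does not by itself rule out a function of degree $<q$; you would need a multi-point gap argument or an independent gonality bound, neither of which you supply.

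The irony is that the counting argument you sketch in passing \emph{is} the complete proof, and it is exactly what the paper does --- with no Hurwitz needed. Since every rational place of $H$ restricts to a rational place of $\Fqsq(f)$, and above each rational place of $\Fqsq(f)$ sit at most $[H:\Fqsq(f)]=d$ places of $H$, one gets
\[
q^3+1 \le d\,(q^2+1),
\]
whence $d > q-1$ and so $d\ge q$. That one line replaces both the Hurwitz detour and the semigroup appeal; drop those and promote the inequality above to the main argument.
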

\begin{proof}
Choose a point $P = (\alpha,\beta)$ on the affine Hermitian curve  and choose two distinct  lines
 $L_1$, $L_2$ passing through     $P$ which are not `vertical', that is, neither of the lines are of the form
$x-a$. Such a pair of lines is easily constructed. Indeed, choose two distinct nonzero slopes $M_1$ and $M_2$.
Then, by the surjectivity of the Frobenius endomorphism, there exist $m_1,m_2\in \Fqsq$ such that $M_1 = m_1^q$ and $M_2 = m_2^q$.
Find constants $c_1, c_2$ such that the lines $L_1 = y-m_1^qx+c_1$ and
$L_2 = y-m_2^q x + c_2$ both pass through the point $P$. One easily sees that the lines are
of the form $L_i = \tau_{m_i,n_i}-\gamma_i$ for some $m_i, n_i,\gamma_i\in \Fqsq$    ($i=1,2$).  From
 the previous section we know that the lattice vector corresponding to  $L_1/L_2$  has $q$ ones, $q$
 minus ones, and that the remaining entries are all zero. The $\ell_1$-norm of the corresponding lattice vector is $2q$
 and the Euclidean
 norm of    that
lattice vector  equals  $\sqrt{2q}$. Thus, the minimum of the $\ell_1$-norms of the nonzero lattice  
vectors is at most $2q$.

Now choose a function $f$ corresponding to a nonzero lattice vector.
Note that the sum of the positive entries of the corresponding lattice vector equals   minus the
sum of the negative entries,  which also equals the degree $[H:\mathbb{F}_{q^2}(f)]$.
Now $H$ has $q^3+1$ rational places, so
$q^3 + 1 \le [H:\mathbb{F}_{q^2}(f)](q^2 + 1)$,  whence
$[H:\mathbb{F}_{q^2}(f)] \ge (q^3 + 1 )/(q^2+1)$ and thus  $[H:\mathbb{F}_{q^2}(f)] \ge q$.
 Consequently, the  minimum $\ell_1$-norm is at least $2q$.
From above it follows that the minimum $\ell_1$ norm is exactly $2q$.

 Now let $f$ be a function which corresponds to a nonzero lattice vector of minimum $\ell_2$-norm.
 Then $\|f\|_2^2  \ge 2 \|f\|_1 \ge 2q$ with equality throughout if $f = L_1/L_2$. It follows
 that the minimum norm of the lattice is $\sqrt{2q}$.
\end{proof}

\section{The exact determinant of the lattice}
\label{det_sect}

First we recall  part of the proof of Tsfasman and Vladut for  the lower bound (\ref{mind}) on the minimum distance of a lattice
from a function field
  given in \cite{tv}. Let $F$ be a function field over a finite field $\FF_q$ and
let $\calP$ be a nonempty set of rational places of $F$.
The set $O_\calP^*$ is the set of all nonzero functions $f$ whose support is contained in $\calP$.
Put $n= \#\calP$.  We use the obvious one-to-one correspondence between the set of divisors with support contained in $\calP$ (we denote this set by $\Div_\calP^0$) and the root lattice $A_{n-1}$.
The set of all divisors of functions from $O_\calP^*$ is a sublattice of $A_{n-1}$ denoted by $L_\calP$.
Now
$A_{n-1}/L$ is isomorphic to $\Div_0(\calP)/\mathrm{Princ}(\calP)$ where
$\mathrm{Princ}(\calP)$ is the set of all principal divisors with support in $\calP$.
Thus  $[A_{n-1}:L] = |\Div_0(\calP)/\mathrm{Princ}(\calP)|$ and it follows that
$$\mathrm{Vol}(\RR^n/L) = \mathrm{Vol}(\RR^n/A_{n-1}) [A_{n-1}:L] = \sqrt{n}  |\Div_0(\calP)/\mathrm{Princ}(\calP)|.$$
The group $\Div_0(\calP)/\mathrm{Princ}(\calP)$ is isomorphic to a subgroup of the divisor class group, and hence  the volume is bounded above by $\sqrt{n} h_F$ where $h_F$ is the class number of~$F$.

\begin{prop}
The determinant of the lattice is
$\sqrt{q^3+1} \cdot (q+1)^{q^2-q}$.
\end{prop}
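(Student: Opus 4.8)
The plan is to compute $\det L_\calP$ exactly by showing that the two inequalities in~(\ref{det}) are both equalities for the Hermitian function field $H/\Fqsq$, at least after inserting the correct value of the class number. First I would recall that the zeta function of the Hermitian curve is completely known: since the curve is maximal over $\Fqsq$, all Frobenius eigenvalues equal $-q$, so the $L$-polynomial is $L(t) = (1+qt)^{2g} = (1+qt)^{q^2-q}$, and therefore the divisor class number is $h_H = L(1) = (1+q)^{q^2-q} = (q+1)^{q^2-q}$. Combined with $n = q^3+1$, this gives the candidate value $\sqrt{q^3+1}\,(q+1)^{q^2-q}$, and it also shows that the right-hand inequality in~(\ref{det}) — the bound $h_F \le (1+q+\frac{n-q-1}{g})^g$ — is in fact an equality here, because $1+q+\frac{(q^3+1)-q-1}{q(q-1)/2} = 1+q + \frac{q^3-q}{q(q-1)/2} = 1+q+2(q+1) = 3q+3 = 3(q+1)$... which is \emph{not} $(q+1)$, so that bound is not sharp; instead one must use the exact value $h_H = (q+1)^{q^2-q}$ directly.

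The real content is then the left-hand inequality: I must show $\det L_\calP = \sqrt{n}\,h_H$, i.e.\ that $\Div_0(\calP)/\mathrm{Princ}(\calP)$ is \emph{all} of $\mathrm{Cl}^0(H)$, equivalently that the divisor classes $[P_i - P_0]$, as $P_i$ ranges over the $q^3+1$ rational places, generate the full degree-zero class group. Here I would invoke what the introduction already asserts, namely that for the Hermitian field the group $G = \langle S\rangle$ generated by $S = \{[P_i-P_0]\}$ is isomorphic to $\zed_{q+1}^{q^2-q}$, which has order exactly $(q+1)^{q^2-q} = h_H$; since $G$ is a subgroup of $\mathrm{Cl}^0(H)$ of order equal to $|\mathrm{Cl}^0(H)|$, we get $G = \mathrm{Cl}^0(H)$. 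Thus $[A_{n-1}:L_\calP] = |\Div_0(\calP)/\mathrm{Princ}(\calP)| = |\mathrm{Cl}^0(H)| = h_H = (q+1)^{q^2-q}$, and by the volume computation recalled just before the proposition, $\det L_\calP = \mathrm{Vol}(\RR^n/L_\calP) = \sqrt{n}\,[A_{n-1}:L_\calP] = \sqrt{q^3+1}\,(q+1)^{q^2-q}$.

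To make the group-theoretic input self-contained I would include a short argument that $G \cong \zed_{q+1}^{q^2-q}$. One direction — that $(q+1)G = 0$ — follows from Lemma~\ref{tauram}: for any two distinct rational places $P,P'$ one can find lines (tangent lines $\tau_{\alpha,\beta}-\gamma$ or the vertical line $x-c$, plus a common tangent through a third point) whose ratio has divisor $(q+1)P - (q+1)P'$ up to a permutation of supports, so $(q+1)[P-P_0] = 0$ for all $i$, hence $(q+1)G=0$ and $G$ is an $\Fqtwo$-... rather, an elementary abelian-type $\zed_{q+1}$-module, i.e.\ a quotient of $\zed_{q+1}^{\,?}$. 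For the rank and the order one appeals to the known structure of $\mathrm{Cl}^0(H)$ coming from the $L$-polynomial $(1+qt)^{q^2-q}$ together with the fact that the rational places generate it (a standard consequence of the functional equation / the fact that $\mathrm{Cl}^0$ is generated by classes of places of bounded degree, combined with $\gcd$ considerations on degrees since there is a rational place). I expect the main obstacle to be precisely this last point — cleanly proving that the $q^3+1$ rational places already generate all of $\mathrm{Cl}^0(H)$ rather than a proper subgroup — and I would either cite the literature on the Hermitian curve's class group (e.g.\ via~\cite{stich}) or derive it from the explicit divisor relations in Lemma~\ref{tauram} showing the $[P_{\alpha,\beta}-Q_\infty]$ satisfy exactly the relations defining $\zed_{q+1}^{q^2-q}$ and no more.
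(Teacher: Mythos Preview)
Your approach is essentially the same as the paper's: both use $\det L_\calP = \sqrt{n}\,[A_{n-1}:L_\calP] = \sqrt{n}\,|\Div_0(\calP)/\mathrm{Princ}(\calP)|$, then argue that this quotient equals the full class group $\mathrm{Cl}^0(H)$ of order $(q+1)^{q^2-q}$. The paper dispatches the point you correctly flagged as the real obstacle---that the rational places generate all of $\mathrm{Cl}^0(H)$---by a one-line citation to Hess~\cite{hess}, and simply states the class-group structure $\ZZ_{q+1}^{q^2-q}$ rather than computing $h_H$ via the $L$-polynomial as you do; your detour through the second inequality in~(\ref{det}) is unnecessary and can be dropped.
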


\begin{proof}
In the case of Hermitian function fields, the divisor classes $P-Q$ modulo $\mathrm{Princ}(\calP)$, where $P,Q\in \calP$, generate the group $\Div_0(\calP)/\mathrm{Princ}(\calP)$ (see \cite{hess}). Thus the group  $\Div_0(\calP)/\mathrm{Princ}(\calP)$ is isomorphic to the divisor class group of the Hermitian function field. The class group of the Hermitian function field  is
 isomorphic to $\ZZ_{q+1}^{q^2-q}$, and so the class number is $(q+1)^{q^2-q}$. Since $n = q^3 + 1$, the  result  follows
 from the discussion above.
\end{proof}

\section{The lattice is generated by its minimal vectors}
\label{gen_min}

From Lemma \ref{tauram} and Theorem \ref{mindist} we infer the following lemma.

\begin{lemma}\label{minv}
If  $L_1$ and $L_2$ are two distinct lines then
$(L_1/L_2)$  {\rm (}or $(L_2/L_1)${\rm )} is a minimal vector if one of the following holds:

$\bullet$ $L_1$ and $L_2$ are of the form $x-\alpha$,

$\bullet$ one of $L_1, L_2$  is of the form $x-\alpha$ and the other is a non-tangent line {\rm (}of the form $y + ax + c${\rm )}
and both lines have exactly one point of intersection,

$\bullet$  both $L_1$ and $L_2$ are non-tangent lines {\rm (}of the form $y+ax+c${\rm )} with a common point of intersection.
\end{lemma}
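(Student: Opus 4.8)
The plan is to check each of the three bullet cases by using Lemma~\ref{tauram} to write down the divisor of $L_1/L_2$ explicitly and then to verify that the resulting lattice vector has Euclidean norm $\sqrt{2q}$, which by Theorem~\ref{mindist} is exactly the minimum distance. The key observation, already implicit in the proof of Theorem~\ref{mindist}, is that a lattice vector has norm $\sqrt{2q}$ precisely when its $\ell_1$-norm equals $2q$ and its support consists only of entries equal to $\pm 1$; equivalently, the divisor $(L_1/L_2) = (L_1) - (L_2)$ must be of the form $\sum_{j} P_j - \sum_j R_j$ with the $P_j$ distinct, the $R_j$ distinct, $\{P_j\} \cap \{R_j\} = \emptyset$, and exactly $q$ terms on each side. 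So in each case I must compute $\deg L_i$ and determine how much cancellation occurs at $Q_\infty$ and at the common points.

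First I would treat the first bullet. If $L_1 = x-\alpha$ and $L_2 = x-\alpha'$ with $\alpha \neq \alpha'$, then by Lemma~\ref{tauram}(e) we have $(L_1) = \sum_d P_{\alpha,d} - qQ_\infty$ and $(L_2) = \sum_{d'} P_{\alpha',d'} - qQ_\infty$, so the $Q_\infty$ terms cancel completely and $(L_1/L_2) = \sum_d P_{\alpha,d} - \sum_{d'} P_{\alpha',d'}$. Since $\alpha \neq \alpha'$ the two sets of places are disjoint (they have different $x$-coordinates), each set has $q$ elements with multiplicity one, so the vector has $q$ ones and $q$ minus ones: norm $\sqrt{2q}$. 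For the second bullet, $L_1 = x - \alpha$ has divisor $\sum_d P_{\alpha,d} - qQ_\infty$ and $L_2 = y + ax + c$ is a non-tangent line, so by Lemma~\ref{tauram}(d) its divisor is $\sum_{i=0}^q P_i - (q+1)Q_\infty$ for $q+1$ distinct points $P_i \in \calK$; thus $(L_1/L_2) = \sum_d P_{\alpha,d} - \sum_{i=0}^q P_i + Q_\infty$. The hypothesis that the two lines have exactly one common point means exactly one of the $P_i$ lies among the $P_{\alpha,d}$; that point cancels, leaving $q-1$ points on the $x-\alpha$ side together with $Q_\infty$ (so $q$ positive entries, all $1$) against the $q$ remaining $P_i$ (all $-1$), and one checks the two sides are disjoint. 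Again norm $\sqrt{2q}$. The third bullet is analogous: both $(L_1) = \sum_{i} P_i - (q+1)Q_\infty$ and $(L_2) = \sum_j R_j - (q+1)Q_\infty$ by Lemma~\ref{tauram}(d), the $Q_\infty$ terms cancel, the single common point of intersection cancels one term from each side, and we are left with $q$ distinct points against $q$ distinct points with disjoint supports.

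The main thing to be careful about — and what I expect to be the only real obstacle — is the bookkeeping at $Q_\infty$ and the verification that the supports on the two sides are genuinely disjoint after cancellation, i.e. that no surviving $P_j$ equals a surviving $R_j$. In the second and third bullets this is forced by the hypothesis ``exactly one point of intersection'' (a line $y+ax+c$ and another such line, or a vertical line, meet the Hermitian curve in at most one common point unless they are equal, and the hypothesis pins this down to exactly one); I would spell out that two distinct lines through a common point of $\calK$ cannot share a second point of $\calK$, so that the common point that cancels is unique. One should also note in the second bullet that $q+1 - 1 = q$ and $q - 1 + 1 = q$, so the counts on both sides really do come out to $q$, and that $Q_\infty \notin \calK$ so it cannot coincide with any affine point. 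Once these disjointness and counting checks are in place, the norm computation $\sqrt{2q}$ is immediate and the lemma follows from Theorem~\ref{mindist}.
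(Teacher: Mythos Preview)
Your proposal is correct and takes essentially the same approach as the paper, which does not give a detailed proof but simply states that the lemma is inferred from Lemma~\ref{tauram} and Theorem~\ref{mindist}. You have correctly supplied the explicit divisor computations and the cancellation bookkeeping at $Q_\infty$ and at the common intersection point that the paper leaves to the reader.
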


G. Hiss \cite{gh} showed that every function in $\calO_\calP^*$  is the product of functions of the form $ax+bx+c$ and their inverses.  This fact is essential in the proof of the next  result.
\begin{thm}
The lattice $L_\calP$ is generated by the minimal vectors and  is hence  well-rounded.
\end{thm}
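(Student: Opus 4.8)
The plan is to leverage the two structural results at our disposal: Hiss's theorem, which says every $f \in \calO_\calP^*$ factors as a product of lines $ax+by+c$ and their inverses, and Lemma~\ref{minv}, which tells us precisely which quotients $L_1/L_2$ of lines give minimal vectors. Write $\Lambda$ for the sublattice of $L_\calP$ generated by all minimal vectors. Our goal is to show $\Lambda = L_\calP$. By Hiss's theorem, $L_\calP$ is generated as a group by the divisors $(\ell)$ of individual lines $\ell = ax+by+c$ together with their negatives; equivalently, since differences of such divisors are what live in $A_{n-1}$, it suffices to show that for any two lines $L_1, L_2$, the vector $(L_1) - (L_2) = (L_1/L_2)$ lies in $\Lambda$. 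So the whole problem reduces to: express an arbitrary $(L_1/L_2)$ as a $\zed$-combination of the three privileged types listed in Lemma~\ref{minv}.

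First I would reduce to a common ``hub.'' Fix once and for all a vertical line $V_0 = x - \alpha_0$. For an arbitrary line $L$, I claim $(L) - (V_0)$ lies in $\Lambda$; granting this for every $L$, we get $(L_1/L_2) = \big((L_1)-(V_0)\big) - \big((L_2)-(V_0)\big) \in \Lambda$ and we are done. To prove the claim, I distinguish cases on the type of $L$ according to Lemma~\ref{tauram}. If $L = x-\alpha$ is itself vertical, then $(L) - (V_0)$ is a quotient of two vertical lines, hence a minimal vector by the first bullet of Lemma~\ref{minv}, so it is in $\Lambda$. If $L = y+ax+c$ is a non-tangent line, its $q+1$ points of $\calK$ are given explicitly by Lemma~\ref{tauram}(d); I need a vertical line $x-\alpha$ passing through exactly one of those points, i.e., with $\alpha$ equal to one of the first coordinates $-b^q + \delta\zeta^i$ — then $(L) - (x-\alpha)$ is a minimal vector by the second bullet, and $(x-\alpha)-(V_0)$ is a minimal vector by the first bullet, so $(L)-(V_0) \in \Lambda$.

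The genuinely delicate case is when $L = \tau_{\alpha,\beta}$ is a tangent line, since tangent-line quotients are \emph{not} among the minimal vectors in Lemma~\ref{minv}. Here the idea is to route through non-tangent lines: the tangent line at $P_{\alpha,\beta}$ should be expressible via a telescoping identity among lines through $P_{\alpha,\beta}$. Concretely, I would look for lines $M_1, M_2$ (non-tangent, or vertical) all passing through $P_{\alpha,\beta}$ such that a product/quotient relation among divisors forces $(\tau_{\alpha,\beta})$ into the span of minimal-vector quotients of the $M_i$'s and verticals — for instance, using that $\tau_{\alpha,\beta}^q + \tau_{\alpha,\beta} = (x-\alpha)^{q+1}$ from~(\ref{taux}), which relates the tangent divisor to that of $x-\alpha$ and of the non-tangent lines $\tau_{\alpha,\beta}-\gamma$ with $\gamma^q+\gamma \ne 0$. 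Taking divisors of both sides of a suitable factorization of $(x-\alpha)^{q+1} - \prod(\tau_{\alpha,\beta}-\gamma)$-type identity should exhibit $q(\tau_{\alpha,\beta})$, and then $(\tau_{\alpha,\beta})$, as a $\zed$-combination of divisors of vertical and non-tangent lines, each of which we have already placed in $\Lambda + \zed(V_0)$ by the previous paragraph. One must be careful that the auxiliary non-tangent lines really do have a common point with the chosen verticals so that Lemma~\ref{minv} applies; this is where the explicit point descriptions in Lemma~\ref{tauram}(c),(d) do the work, and ensuring such choices exist for all $q$ is the main obstacle. Once every $(L)-(V_0)$ is shown to lie in $\Lambda$, the theorem follows, and since $L_\calP$ then has a spanning set of minimal vectors it is in particular well-rounded.
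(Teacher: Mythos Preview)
Your overall architecture matches the paper's: invoke Hiss to reduce to lines, then handle each line type. But there are two gaps, one minor and one substantive.

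\textbf{Minor gap in the reduction.} You write that ``differences of such divisors are what live in $A_{n-1}$,'' but that is not so: each divisor $(\ell)$ of a single line is principal, hence already of degree $0$ and in $A_{n-1}$. So Hiss gives $L_\calP = \langle (\ell) : \ell \text{ a line}\rangle$, not merely the span of differences $(L_1)-(L_2)$. Your hub argument shows every $(\ell)$ is congruent to $(V_0)$ modulo $\Lambda$, which only yields $L_\calP/\Lambda$ cyclic generated by the class of $(V_0)$; you still need $(V_0)\in\Lambda$. This is reparable (the paper's factorization $x-e=\prod_{i=1}^q \frac{y-d_i}{x-\zeta^i e}$ does exactly that for any $e\ne 0$), but it is missing from your plan.

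\textbf{Substantive gap: the tangent case.} The identity you propose to exploit, $\tau_{\alpha,\beta}^q+\tau_{\alpha,\beta}=(x-\alpha)^{q+1}$, does not do what you want. Over $\Fqsq$ the left side factors as $\prod_{\rho^q+\rho=0}(\tau_{\alpha,\beta}-\rho)$, and by Lemma~\ref{tauram}(a) every factor $\tau_{\alpha,\beta}-\rho$ is again a \emph{tangent} line; so the divisor identity is $\sum_\rho(\tau_{\alpha,\beta+\rho})=(q+1)(x-\alpha)$, relating $q$ unknown tangent divisors to a vertical one and giving no purchase on any single $(\tau_{\alpha,\beta})$. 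If instead you shift by $e^{q+1}\ne 0$ to get $\tau^q+\tau-e^{q+1}=(x-\alpha)^{q+1}-e^{q+1}$, then all left-hand factors $\tau-d_i$ are non-tangent and no tangent line appears at all; this is precisely the identity the paper uses for the \emph{non-tangent} case, not the tangent one. Your hope of extracting ``$q(\tau_{\alpha,\beta})$, and then $(\tau_{\alpha,\beta})$'' is therefore unfounded: nothing in the sketch isolates a single tangent divisor as a $\zed$-combination of vertical and non-tangent ones. The paper resolves this with a genuinely different trick: the identity $y^{q+1}-x^{q+1}=(y-1)^{q+1}-1$, factored on both sides, exhibits the tangent line $y$ as one non-tangent line times $q$ quotients of non-tangent lines sharing a curve point (each quotient a minimal vector by Lemma~\ref{minv}); an automorphism then transports this to every $\tau_{\alpha,\beta}$. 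That step is the crux of the proof, and your proposal does not supply it or an alternative.
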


 \begin{proof}
Since the lattice is generated by the divisors of the lines \cite{gh}, it suffices
to show that every such divisor is an integer linear combination of minimal vectors
of the lattice. We   call a line $L = ax+by+c$  {\em good}  if  the divisor of
$L$ is an integer linear combination of minimal vectors. Thus our goal is to show
that all lines are good. We consider different cases. Throughout the proof,
$\zeta\in \Fqsq$ denotes a primitive $(q+1)$st root of unity.

{\em Case 1:   Suppose that $d,e\in \Fqsq$ satisfy $d^q + d =e^{q+1}$ with $e\ne 0$. We show that
the lines $y-d$ and $x-e$ are good.}
Let $d_1 = d,d_2, \ldots, d_q$ be all the solutions to $y^q + y = e^{q+1}$.
Then
$$\prod_{i=1}^q  ( y - d_i ) = y^q + y - e^{q+1} = x^{q+1} - e^{q+1} =
\prod_{i=0}^q  (x-\zeta^i e).$$
It follows that
$$x-e  = \prod_{i=1}^q \left( \frac{y-d_i }{x-\zeta^i e} \right).$$
The lines $y-d_i$ and $x-e\zeta^i$ have just one point of intersection and
$y-d_i$ is a non-tangent line (since $d_i$ has nonzero trace).
So by Lemma \ref{minv} it follows that the lattice vector corresponding to
the function $\frac{y-d_i}{x-\zeta^i e}$ is a minimal vector. Since the divisor
of $x-e$ is the sum of the divisors of the functions $\frac{y-d_i}{x-\zeta^i e}$, $1\le i \le q$,   we arrive at the conclusion
that the line $x-e$ is good.

On the other hand,  we also have
$$y-d  = (x-e)(x-e\zeta)\prod_{i=2}^q \l( \frac{x-e\zeta^i}{y-d_i} \r).$$
Since each factor on the right corresponds to  a lattice vector which is either a minimal vector or
which  can be expressed as a linear combination of minimal vectors, it follows that
the line $y-d$ is good.

{\em Case 2: We show that every non-tangent line of the form
 $L = y+bx +c$ is good.}
Since $L$ is a non-tangent line,   we infer from Lemma \ref{tauram} that
$(-b,c) \not \in \calK$, that is, $c^q + c \ne (-b)^{q+1} = b^{q+1}$.

Let $\alpha = -b^q$, so that  $b= -\alpha^q$. Note that $\alpha^{q+1} = b^{q+1}$
 and let $\beta\in \Fqsq$ be any solution to
$\beta^q + \beta = \alpha^{q+1} \; ( = b^{q+1})$.
Then $L= y-\alpha^q x + \beta^q +c-\beta^q = \tau_{\alpha,\beta} - d$ where
$d= \beta^q-c$.  Observe that $d^q + d = \beta^q + \beta -(c^q + c) = b^{q+1}-(c^q +c) \ne 0$.

Choose $e \in \Fqsq$ such that $d^q + d = e^{q+1}$ (so $c^q + c = b^{q+1} - e^{q+1}$).
Note that $e \ne 0$. Let
$d_1 = d,d_2, \ldots, d_q \in \Fqsq$ be all the solutions to $y^q + y = e^{q+1}$.
Writing $\tau$ for $\tau_{\alpha,\beta}$ we get that
\begin{equation}\label{maintaueqn}
\prod_{i=1}^q  (\tau - d_i )                   = \tau^q + \tau - e^{q+1} = (x-\alpha)^{q+1} - e^{q+1} =
\prod_{i=0}^q  (x-\alpha - \zeta^i e).
\end{equation}
 It follows that
 \begin{equation}\label{newx}
 x - \alpha - e = \prod_{i=1}^q \left(  \frac{\tau-d_i}{x-\alpha- \zeta^i e} \right).
 \end{equation}
 Since $d_i^q + d_i = e^{q+1} = d^q + d  \ne 0$, we obtain from Lemma \ref{tauram}$(c)$
that the lines $\tau-d_i$ are not tangent lines.
The line $\tau - d_i$ intersects the line $x-\alpha - \zeta^i e$ at
exactly one point:  $(\alpha + \zeta^i e ,  \beta + d_i + e \alpha^q \zeta^i)$. Moreover,
this point belongs to $\calK$   because
 \begin{eqnarray*}
& & (\beta + d_i + e \alpha^q \zeta^i)^q + (\beta + d_i + e \alpha^q \zeta^i)\\
& &  = \beta^q + \beta + d_i^q + d_i + e^q \alpha \zeta^{iq}  + e \alpha^q + \zeta^i\\
& &  = \alpha^{q+1} + e^{q+1}  + e^q \alpha \zeta^{iq}  + e \alpha^q + \zeta^i
 = (\alpha + \zeta^i e )^{q+1}.
\end{eqnarray*}
 Thus the lattice vectors corresponding to functions $\frac{\tau-d_i}{x-\alpha- \zeta^i e}$
$(1 \le i \le q)$ are minimal vectors.  It follows from
equation (\ref{newx}) that the line $x-\alpha- e$ is good.
Replacing $e$ by $\zeta e$ in the above argument, we see that $x-\alpha - \zeta e$ is also
good.
From equation (\ref{maintaueqn}) we get
\begin{equation}\label{taud}
L = \tau - d  = (x-\alpha-e)(x-\alpha - e\zeta)\prod_{i=2}^q \l( \frac{x-\alpha - \zeta^i e}{\tau-d_i} \r).
\end{equation}
Since each factor on the right corresponds to  a lattice vector which is either a minimal vector or
which  can be expressed as a linear combination of minimal vectors, we conclude that
the line $L$ is good.

Note that Case 1 is actually implied as a special case of the proof of Case 2 with $b=0$.

{\em Case 3:  We prove that the tangent line $\tau_{0,0} = y$ is good.}
First of all observe that
$$y^{q+1}-x^{q+1} =  y^{q+1} -y^q - y = (y-1)^{q+1} - 1 = \prod_{i=0}^q (y-1-\zeta^i).$$
On the other hand we also have that  $y^{q+1}-x^{q+1} = \prod_{i=0}^q ( y - \zeta^i x)$, and so
$$\prod_{i=0}^q (y-1-\zeta^i) = \prod_{i=0}^q ( y - \zeta^i x).$$
Since $-1$ is a $(q+1)$st root of unity, there is a unique index $j  \in \{0,\ldots,q\}$ such that
 $\zeta^j = -1$ (actually $j=0$ in characteristic $2$ and $j=(q+1)/2$ in odd characteristics). Then
\begin{equation}\label{eqny}
y = (y-\zeta^j x) \prod_{i=0,\, i\ne j}^q \left( \frac{y-\zeta^i x}{y-1-\zeta^i} \right).
\end{equation}
The points on the line $y-(1+\zeta^i)$ are  $((1+\zeta^i)\zeta^k,1+\zeta^i) \in \calK$  with
$k=0,1,\ldots, q$.
This implies that the line $y-1-\zeta^i$  (for $i \ne j$) intersects the line $y-\zeta^i x$ in exactly one point,
namely  $( (1+\zeta^i)\zeta^{q+1-i}, 1+\zeta^i)$. This point belongs to
$\calK$ because
\begin{eqnarray*}
\l( (1+\zeta^i)\zeta^{q+1-i} \r)^{q+1} & = & (1 + \zeta^i)^{q+1} = (1 + \zeta^{iq})(1+\zeta^i) \\
& = & 1 + \zeta^{iq} + \zeta^i + 1 = (1 +\zeta^i)^q  + (1 + \zeta^i).
\end{eqnarray*}
Note that the lines $y - \zeta^i x$ $(1\le  i \le q)$ are not
tangent lines  since  $(\zeta^i)^{q+1} = 1  \ne 0$ and thus $(-\zeta^i, 0) \not \in \calK$.

It follows that the lattice vector corresponding to the functions
$\frac{y-\zeta^i x}{y-1-\zeta^i}$  $(0\le i \le q, \; i \ne j)$ is a minimal vector.  As the line $y-\zeta^jx$ is not a tangent line, it
is good by Case 2.  It   now results from (\ref{eqny}) that the line $y$ is good.

 {\em Case 4: For every $(\alpha,\beta) \in \calK$, the tangent line  $\tau_{\alpha,\beta} = y -\alpha^q x + \beta^q$ is good.}
Set $\tau := \tau_{\alpha,\beta}$ and $x_\alpha = x-\alpha$.  Note that $(-\alpha, \beta^q) \in \calK$.
By \cite[page 238]{stich}, there exists a $\sigma \in \mathrm{Aut}(H/\Fqsq)$
such that $\sigma(x) = x-\alpha$ and $\sigma(y) = y - \alpha^q x +\beta^q = \tau$.
Observe that, in the notation of \cite{stich}, we are using $(d,e) = (-\alpha, \beta^q)$. Applying $\sigma$ to equation (\ref{eqny}) we get
\begin{equation}\label{eqntau}
 \tau = (\tau -\zeta^j x_\alpha ) \prod_{i=0,i \ne j}^q \left( \frac{\tau-\zeta^i x_\alpha }{\tau-1-\zeta^i} \right).
\end{equation}
Note that one could also derive this identity in the same way as in Case 3.
By \cite[Lemma 3.5.2]{stich}, a place  $Q$ is a common zero of $\sigma(y-1-\zeta^i)$ and
$\sigma(y-\zeta^i x)$ if and only if  $\sigma^{-1}(Q)$ is a common zero of
$y-1-\zeta^i$ and $y-\zeta^ix$.
Thus, using the results from Case 3, we see that  the line $\tau-1-\zeta^i = \sigma(y-1-\zeta^i)$ intersects the line
$\tau-\zeta^i x_\alpha = \sigma(y-\zeta^i x)$ at exactly one point. Moreover, again by \cite[Lemma 3.5.2]{stich},
the lines $\tau-\zeta^i x_\alpha = \sigma(y-\zeta^i x)$  and $\tau-1-\zeta^i = \sigma(y-1-\zeta^i)$
are non-tangent lines, both of the form $y+ax+c$. Thus by Lemma \ref{minv}, the lattice vectors corresponding to the
functions  $\frac{\tau-\zeta^i x_\alpha }{\tau-1-\zeta^i}$ $(0\le i \le q, \; i \ne j)$ are all minimal. Since the line
$\tau -\zeta^j x_\alpha $ is good, it follows from equation (\ref{eqntau}) that the line $\tau$ is good as well.

{\em Case 5:  We finally show that the line $x$ is good.}
We start with the observation that
$$y^q + y - (x^q + x) = x^{q+1} - x^q -x   = (x-1)^{q+1} - 1 = \prod_{i=0}^q (x-1 - \zeta^i).$$
On the other hand,
$$y^q + y - (x^q + x) =(y-x)^q +(y-x)  = \prod_{i = 1}^q (y-x- \rho_i),$$
where $\rho_1,\ldots, \rho_q\in \Fqsq$ are all the solutions to $\rho^q + \rho = 0$. Thus
\begin{equation}\label{xeqn}
\prod_{i=0}^q x-1 - \zeta^i   =  \prod_{i = 1}^q (y-x- \rho_i). \end{equation}
Let $z_1,z_2, \ldots, z_q$ be a renumbering of $1+\zeta^0,1+\zeta^1, \ldots, 1+\zeta^{j-1}, 1+\zeta^{j+1},
\ldots, 1+\zeta^{q}$ (recall that $\zeta^j = -1$).
Then it follows from equation (\ref{xeqn}) that
\begin{equation}\label{eqnx} x = \prod_{i=1}^q \left( \frac{y-x - \rho_i}{x-z_i} \right).
\end{equation}
Observe that the two lines
$x -( 1+\zeta^m )$ and
$y - x - \rho_i$ intersect at the point $(1+ \zeta^m, 1+\zeta^m + \rho_i)$. Moreover
the point  $(1+ \zeta^m, 1+\zeta^m + \rho_i)$ belongs to $\calK$  since
$$ (1+\zeta^m + \rho_i)^q + (1+\zeta^m + \rho_i)
= \rho_i^i + \rho_i^q +    1 + \zeta^{mq} + 1 + \zeta^m
= (1 + \zeta^m)^{q+1}.$$
The line $y-x-\rho_i$ is a non-tangent line   because  $(1,-\rho_i) \not \in \calK$.  Thus,  by
Lemma~\ref{minv},  the lattice vector corresponding to
the function $\frac{y-x - \rho_i}{x-z_i}$ is a minimal vector. It therefore follows from equation (\ref{eqnx}) that
the line $x$ is good.
\end{proof}

\section{Automorphism groups of lattices}
\label{automorphisms}

In this section we discuss automorphisms of lattices coming from generating sets in Abelian groups and specifically address the case of
Hermitian and other curves.

\subsection{Lattices from Abelian groups}

Let
$G = \{ g_0, g_1, \dots, g_{n-1}, \dots, g_{N-1} \}$
be an Abelian group with $g_0 = 0$, and let
$S = \{ g_0, g_1, \dots, g_{n-1} \}$
be a subset of $G$. Put
$$L_G = \left\{ \left( x_1,\dots,x_{N-1}, -\sum_{i=1}^{N-1} x_i \right) : x_1,\dots,x_{N-1} \in \zed,\ \sum_{i=1}^{N-1} x_i g_i = 0 \right\} \subseteq A_{N-1}$$
and
$$L_G(S) = \left\{ \left( x_1,\dots,x_{n-1}, -\sum_{i=1}^{n-1} x_i \right) : x_1,\dots,x_{n-1} \in \zed,\ \sum_{i=1}^{n-1} x_i g_i = 0 \right\} \subseteq A_{n-1}.$$
Hence $L_G$ and $L_G(S)$ are full-rank sublattices of the root lattices $A_{N-1}$ and $A_{n-1}$, respectively. We denote the
vectors in $L_G(S)$ by $X = \left( x, -\sum_{i=1}^{n-1} x_i \right)$ with $x = (x_1,\dots,x_{n-1})$ in $\zed^{n-1}$.
The automorphism group $\Aut(L_G(S))$ is defined as the group of all maps of $L_G(S)$ onto itself which extend to linear isometries of $\spn_{\real} A_{n-1}$.
It is easily seen that a map $\tau \in \Aut(L_G(S))$
is necessarily of the form
$$\tau(X)=\tau \left(x_1, \ldots, x_{n-1}, -\sum_{i=1}^{n-1} x_i\right) = \left(Ux,-\sum_{i=1}^{n-1} (Ux)_i \right)$$
with some matrix $U \in \GL_{n-1}(\zed)$.  We therefore identify $\Aut(L_G(S))$ with a subgroup of $\GL_{n-1}(\zed)$.
Moreover, we identify the symmetric group $S_{n-1}$ with the group of all permutation matrices in $\GL_{n-1}(\zed)$.
For the analogous notation regarding the lattice $L_G$, we refer to~\cite{bo_et_al}.

If $S$ is a subgroup of $G$ and $\Aut(S)$ denotes for the automorphism group of $S$, then
$\Aut(L_G(S)) \cap S_{n-1} \cong \Aut(S)$
by Theorem~1.4 of \cite{bo_et_al}.
More generally, if $S$ is any subset of $G$, let us define
$$\Aut(G,S) := \left\{ \sigma \in \Aut(G) : \sigma(g_i) \in S\ \forall g_i \in S \right\}.$$
Notice that every element of $\Aut(G)$ fixes $0$ and permutes $g_1,\dots,g_{N-1}$, which allows us to identify $\Aut(G)$ with a subgroup of $S_{N-1}$, the group of permutations on $N-1$ letters. Think of $S_{n-1}$ as the subgroup of $S_{N-1}$ consisting of all permutations of the first $n-1$ letters. Each element of $\Aut(G,S)$ induces a permutation of $S$, and hence gives rise to an element of $S_{n-1}$. Let us write $\Aut(G,S)^*$ for the group of permutations of $S$ which are extendable to automorphisms of $G$. In other words, every element of $\Aut(G,S)^*$ is a restriction $\sigma|_S : S \to S$ of some element $\sigma \in \Aut(G,S)$ and every element of $\Aut(G,S)$ arises as an extension $\hat{\tau} : G \to G$ of some element $\tau \in \Aut(G,S)^*$.

\begin{thm} \label{auto1}
With notation as above, $\Aut(G,S)^*$ is isomorphic to a subgroup of $\Aut(L_G(S)) \cap S_{n-1}$.
If $S$ is a generating set for $G$, then
$$\Aut(G,S)^* \cong \Aut(L_G(S)) \cap S_{n-1}.$$
\end{thm}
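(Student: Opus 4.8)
The plan is to establish the two claims separately, the first being an inclusion valid for arbitrary $S$ and the second an equality under the generating hypothesis. For the first part I would take $\tau \in \Aut(G,S)^*$, say $\tau = \sigma|_S$ for some $\sigma \in \Aut(G,S)$, and define the associated permutation matrix $P_\tau \in S_{n-1} \subseteq \GL_{n-1}(\zed)$ that sends the $i$th coordinate to the $\tau(i)$th coordinate, where I index coordinates $1,\dots,n-1$ by the nonzero elements $g_1,\dots,g_{n-1}$ of $S$ and coordinate $0$ (the last coordinate, written $-\sum x_i$) is fixed since $\tau$ fixes $g_0 = 0$. The key computation is that $P_\tau$ preserves $L_G(S)$: if $X = (x_1,\dots,x_{n-1},-\sum x_i)$ satisfies $\sum_{i=1}^{n-1} x_i g_i = 0$, then the permuted vector has defining sum $\sum_i x_i g_{\tau(i)} = \sum_i x_i \,\sigma(g_i) = \sigma\!\left(\sum_i x_i g_i\right) = \sigma(0) = 0$, using that $\sigma$ is a group homomorphism. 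Hence $P_\tau \in \Aut(L_G(S)) \cap S_{n-1}$, and $\tau \mapsto P_\tau$ is readily checked to be an injective group homomorphism (injectivity because a permutation is determined by its action on coordinates). This gives $\Aut(G,S)^* \hookrightarrow \Aut(L_G(S)) \cap S_{n-1}$.

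For the reverse inclusion under the assumption that $S$ generates $G$, I would start with an arbitrary $P \in \Aut(L_G(S)) \cap S_{n-1}$, which by definition is a permutation matrix preserving $L_G(S)$; it corresponds to a permutation $\pi$ of $\{1,\dots,n-1\}$ (it must fix the last coordinate, since that coordinate is $-\sum x_i$ and the lattice lives in $A_{n-1}$, so $\pi$ permutes only $g_1,\dots,g_{n-1}$). The goal is to show the induced bijection $S \to S$ sending $g_i \mapsto g_{\pi(i)}$ (and $0 \mapsto 0$) extends to an automorphism of $G$. First I would show this bijection respects all additive relations among the $g_i$: for any relation $\sum_{i=1}^{n-1} x_i g_i = 0$ with $x_i \in \zed$, the corresponding $X \in L_G(S)$, so $P X \in L_G(S)$, which says exactly $\sum_i x_i g_{\pi(i)} = 0$. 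Since $S$ generates $G$, every element of $G$ is an integer combination $\sum x_i g_i$, and so one can define $\hat\tau: G \to G$ by $\hat\tau\!\left(\sum_i x_i g_i\right) = \sum_i x_i g_{\pi(i)}$; the relation-preservation just verified shows this is well-defined, and it is visibly a homomorphism. Applying the same argument to $P^{-1}$ (also in $\Aut(L_G(S)) \cap S_{n-1}$) produces a two-sided inverse, so $\hat\tau \in \Aut(G)$; since $\hat\tau$ maps each $g_i \in S$ to $g_{\pi(i)} \in S$, it lies in $\Aut(G,S)$, and its restriction to $S$ is the permutation we began with. Thus $P$ is in the image of $\tau \mapsto P_\tau$, giving the equality.

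I expect the main obstacle to be the well-definedness step in the second part: showing that the formula $\hat\tau(\sum x_i g_i) = \sum x_i g_{\pi(i)}$ does not depend on the chosen representation $\sum x_i g_i$ of a group element. This is precisely where the hypothesis that $S$ generates $G$ is used twice — once to know every element has such a representation, and once (implicitly) to ensure that two representations of the same element differ by a relation $\sum x_i g_i = 0$, which $P$ preserves by the lattice-invariance argument. The remaining points — that $\pi$ fixes the last coordinate, that composition of permutation matrices matches composition in $\Aut(G,S)^*$, and that $P^{-1}$ furnishes the inverse automorphism — are routine. One should also note explicitly that when $S$ does not generate $G$, the map $\hat\tau$ cannot in general be built (there is no formula for its values off the subgroup $\langle S \rangle$), which is why only the inclusion, not the equality, holds in the general case; this matches the statement.
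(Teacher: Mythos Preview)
Your proposal is correct and follows essentially the same approach as the paper's proof: construct the map $\Aut(G,S)^* \to \Aut(L_G(S)) \cap S_{n-1}$ by associating to each $\sigma$ the coordinate permutation it induces, verify it lands in the lattice automorphism group via the homomorphism property of $\sigma$, and for the converse (when $S$ generates $G$) extend a coordinate permutation to an automorphism of $G$ by checking that relations $\sum x_i g_i = 0$ are preserved. The only cosmetic difference is that the paper establishes bijectivity of the extended map $\hat\tau$ by proving surjectivity directly and then invoking finiteness of $G$, whereas you build the inverse from $P^{-1}$; both are equally valid.
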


\proof
First notice that every element of $\Aut(G,S)^*$ fixes $0$ and permutes the elements $g_1,\dots,g_{n-1}$. Hence $\Aut(G,S)^*$ can be identified
with a subgroup of the symmetric group~$S_{n-1}$. We denote this subgroup by $\nH$.
Our first objective is to construct an injective group homomorphism $\Phi: \nH \to \Aut(L_G(S)) \cap S_{n-1}$.

Let $\sigma \in \nH$. Then, for every $g_i \in S$, $\sigma(g_i) = g_{\sigma(i)}$ and $\sigma(0)=0$. If
$$X = \left(x_1,\dots,x_{n-1}, -\sum_{i=1}^{n-1} x_i \right) \in L_G(S),$$
then $\sum_{i=1}^{n-1} x_i g_i = 0$. Notice that $\sigma^{-1}$ is also in $\nH$, and so
$$0 = \sigma^{-1}(0) = \sum_{i=1}^{n-1} x_i g_{\sigma^{-1}(i)} = \sum_{i=1}^{n-1} x_{\sigma(i)} g_i.$$
Now define $\tau=\Phi(\sigma)$ on $L_G(S)$ by
$$\tau \left(x_1,\dots,x_{n-1}, -\sum_{i=1}^{n-1} x_i \right) := \left(x_{\sigma(1)},\dots,x_{\sigma(n-1)}, -\sum_{i=1}^{n-1} x_{\sigma(i)} \right).$$
It is clear that $\tau$ maps $L_G(S)$ onto itself.
The matrix $U \in \GL_{n-1}(\zed)$ corresponding to $\tau$ is obviously a permutation matrix.
Consequently, $\tau$ is in $\Aut(L_G(S)) \cap S_{n-1}$. Finally, it is readily seen that $\Phi$ is an injective group homomorphism. Hence $\Phi(\nH) \leq \Aut(L_G(S)) \cap S_{n-1}$.

Now assume that $S$ is a generating set for $G$. We will show that $\Phi(\nH) = \Aut(L_G(S)) \cap S_{n-1}$. Indeed, let $\tau \in \Aut(L_G(S)) \cap S_{n-1}$. If
$$X = \left( x_1,\dots,x_{n-1},-\sum_{i=1}^{n-1} x_i \right) \in L_G(S),$$
then $\tau(X) = (x_{\sigma(1)},\dots,x_{\sigma(n-1)},-\sum_{i=1}^{n-1} x_{\sigma(i)})$ with some $\sigma \in S_{n-1}$, and since
both $X$ and $\tau(X)$ belong to  $L_G(S)$, it follows that
$0 = \sum_{i=1}^{n-1} x_i g_i = \sum_{i=1}^{n-1} x_{\sigma(i)} g_i.$
We have $\tau =\Phi(\sigma)$ with $\sigma: S \to S$ defined by
$\sigma(g_i) := g_{\sigma(i)}$ and $\sigma(0) := 0$. 

To complete the proof, we only need to show that $\sigma$ extends to an automorphism of $G$.
For this, notice that every element $g \in G$ can be written as $g = \sum_{i=1}^{n-1} a_i g_i$ with $a_1,\dots,a_{n-1} \in \zed$, since $S$ generates $G$. Then define
$$\sigma(g) := \sum_{i=1}^{n-1} a_i \sigma(g_i) = \sum_{i=1}^{n-1} a_i g_{\sigma(i)}.$$
To check that this is well-defined, suppose that
$\sum_{i=1}^{n-1} a_i g_i = \sum_{i=1}^{n-1} b_i g_i$
for some integers $a_1,\dots,a_{n-1}, b_1,\dots,b_{n-1}$, and hence $\sum_{i=1}^{n-1} (a_i-b_i) g_i = 0$. Then
$$Y := \left( a_1-b_1,\dots,a_{n-1}-b_{n-1}, \sum_{i=1}^{n-1} (b_i-a_i) \right) \in L_G(S),$$
and so
$$\tau^{-1}(Y) = \left( a_{\sigma^{-1}(1)}-b_{\sigma^{-1}(1)},\dots,a_{\sigma^{-1}(n-1)}-b_{\sigma^{-1}(n-1)}, \sum_{i=1}^{n-1} (b_{\sigma^{-1}(i)}-a_{\sigma^{-1}(i)}) \right)$$
is  in $L_G(S)$, meaning that
$0 = \sum_{i=1}^{n-1} (b_{\sigma^{-1}(i)}-a_{\sigma^{-1}(i)}) g_i = \sum_{i=1}^{n-1} (b_i-a_i) g_{\sigma(i)}.$
Hence $\sum_{i=1}^{n-1} a_i g_{\sigma(i)} = \sum_{i=1}^{n-1} b_i g_{\sigma(i)}$, and so $\sigma$ is well-defined.

Our definition readily implies that $\sigma$ is a group homomorphism. To see that it is surjective, suppose that $g \in G$. Then
$g=\sum_{i=1}^{n-1} a_i g_i$ for some $a_1,\dots,a_{n-1} \in \zed$. For each $1 \leq i \leq n-1$,
$\sigma^{-1}(i) \in \{ 1,\dots,n-1\}$ and $\sigma^{-1}(i) \neq \sigma^{-1}(j)$ whenever $1 \leq i \neq j \leq n-1$,
since $\sigma, \sigma^{-1} \in S_{n-1}$ are bijections. Then let $h = \sum_{i=1}^{n-1} a_i g_{\sigma^{-1}(i)}$,
and notice that $\sigma(h) = g$, hence $\sigma : G \to G$ is a surjective group homomorphism.
Since $G$ is finite, infectivity of $\sigma$ is implied, thus $\sigma \in \Aut(G)$, and so $\tau \in \Phi(H)$. This completes the proof.
\endproof

\subsection{An example}

Let $G=\{0,1,2,3,4,5,6\}=\zed_7 \:(:=\zed/7\zed)$. Then every subset $S \subseteq G$ containing $0$ and at least one other element is a generating
set of $G$. Let, for instance, $S=\{0,1,2,4\}$, which, in the above notation, is an example
of $S=\{0,g_1,\ldots,g_{n-1}\}$ with $n-1=3$. The lattice $L_G(S)$ is
\[\{(x_1,x_2,x_3,-(x_1+x_2+x_3))\in \zed^4: x_1+2x_2+4x_3 = 0 \;\mbox{mod}\; 7\}\subseteq A_3.\]
It can be checked directly that the minimal distance $d(L_G(S))$ equals $\sqrt{6}$ and that $L_G(S)$ has exactly
$6$ minimal vectors, the three vectors $(-2,1,0,1)$, $(0,-2,1,1)$, $(1,0,-2,1)$ and their negatives. As the first three
of these vectors are linearly independent, it follows that $L_G(S)$ is well-rounded. For $j=1,\ldots,6$, we denote by $\si_j \in
\Aut(G)$ the automorphism which sends $1$ to $j$ and thus $k$ to $kj$ modulo 7. Clearly, $\Aut(G)=\{\si_1,\ldots,\si_6\}$. The automorphisms $\si_i$ which
leave $S$ invariant are just $\si_1,\si_2, \si_4$. Consequently, $\Aut(G,S)^*=\{\si_1,\si_2,\si_4\}$ and Theorem~\ref{auto1}
tells us that $\Aut(L_G(S)) \cap S_3 \cong \{\si_1,\si_2,\si_4\}$.

Table 1 below reveals what happens if $S$ ranges over all possible proper subsets of $G=\zed_7$.
The column of the table headed by $n-1=k$  shows the numbers $g_1\ldots g_k$
for the $\tbinom{6}{k}$ possible sets $S=\{0,g_1,\ldots,g_k\}$.
The lattice $L_G(S)$ is well-rounded if and only if the corresponding
numbers  $g_1\ldots g_k$ are in boldface. We also indicated the minimal distance of
$L_G(S)$. For example, the first $8$ lattices in the column $n-1=3$
have the minimal distance $\sqrt{6}$ and the remaining $12$  lattices
in the column $n-1=3$ have the minimal distance $2$. Also added is the group
$\Aut(G,S)^*$ in each case.

Altogether we have $62=2^6-2$ lattices. Exactly $26$ of them are well-rounded
and the remaining $36$ lattices are not well-rounded. It is not a surprise that the group
$\Aut(G,S)^*$ is nontrivial if the lattice is well-rounded, but it is
surprising that this group may also be nontrivial for lattices which are not well-rounded.
Of course, it would be nice to have the implication ``$|{\rm Aut}(G,S)^*| > 1 \;\Rightarrow\;$
the lattice is well-rounded'', but the table shows that this is not true.

\begin{figure}[h]
\[\begin{array}{|c|c|c|c|c|}
\hline
n-1=1 & n-1=2 & n-1=3 & n-1=4 & n-1=5 \\
\hline
d=\sqrt{98} & d=\sqrt{14} & d=\sqrt{6} & d=2 & d=2\\
\1 \ta & \1\3 \ta & \1\2\4 \{\si_1,\si_2,\si_4\}& 1234 \ta & \1\2\3\4\5\ta \\
\2 \ta & \1\5 \ta & \1\2\5 \ta & 1235 \ta & \1\2\3\4\6 \ta \\
\3 \ta & \2\3 \ta  & \1\3\6 \ta  & 1236 \ta & \1\2\3\5\6\ta \\
\4 \ta & \2\6 \ta & \1\4\6 \ta & 1245 \ta & \1\2\4\5\6\ta \\
\5 \ta & \4\5 \ta & \2\3\4 \ta & 1246 \ta & \1\3\4\5\6\ta \\
\6 \ta & \4\6 \ta & \2\5\6 \ta & 1256 \{\si_1,\si_6\}& \2\3\4\5\6\ta \\
& d=\sqrt{6} & \3\4\5 \ta & 1345 \ta &  \\
& 12 \ta & \3\5\6 \{\si_1,\si_2,\si_4\} & 1346 \{\si_1,\si_6\} &  \\
& 14 \ta & d=2 & 1356 \ta & \\
& 16 \{\si_1,\si_6\} & 123\ta  & 1456 \ta & \\
& 24 \ta & 126 \ta & 2345 \{\si_1,\si_6\}& \\
& 25 \{\si_1,\si_6\} & 134 \ta & 2346 \ta & \\
& 34 \{\si_1,\si_6\} & 135 \ta & 2356 \ta & \\
& 35 \ta & 145 \ta & 2456 \ta & \\
& 36 \ta & 156 \ta & 3456 \ta & \\
& 56 \ta & 235 \ta & & \\
& & 236 \ta & & \\
& & 245 \ta & & \\
& & 246 \ta & & \\
& & 346 \ta & & \\
& & 456 \ta & & \\
\hline
\end{array}\]
\centerline{Table 1. Well-roundedness and automorphism groups of lattices from $\ZZ_7$}
\end{figure}

\subsection{Lattices from function fields}

We use the notation of Section~\ref{intro}. In particular, $F$ is an algebraic function field (of a single variable) with the finite field $\FF_q$ as its full field of constants and
 $\calP := \{ P_0,P_1,P_2,\ldots, P_{n-1}\}$ is the set of rational
places of $F$.  The automorphisms  of $F$ permute all places
of a given degree and hence induce  automorphisms of the lattice
$L_\calP$. Thus we may regard $\mathrm{Aut}(F)$ as a subgroup of
$\mathrm{Aut}(L_\calP) \cap S_{n-1}$. Furthermore, each automorphism $\sigma$ of the
divisor class group $\mathrm{Cl}^0(F)$ which permutes the
divisor classes $[P_i -P_0]$ $(1\le i \le n-1)$ also induces an automorphism
of the lattice $L_\calP$. First, note that we may view $\sigma$ as
an element of the symmetric group $S_{n-1}$ by writing $\sigma([P_i - P_0]) = [P_{\sigma(i)} - P_0]$ for
$1\le i \le n-1$.
Second, for every $f  \in O_\calP^*$, $[(f)]$ is the identity
element of $ \mathrm{Cl}^0(F)$ and so, if $(f) = \sum_{i=1}^{n-1} a_i (P_i - P_0)$,
then $\sigma([(f)]) = 0$, that is, the divisor
 $\sum_{i=1}^{n-1} a_i (P_{\sigma(i)} - P_0)$  is again principal, or equivalently,
 $\sum_{i=1}^{n-1} a_i(P_{\sigma(i)} - P_0)$ corresponds to  a lattice point in $L_\calP$.
Let $G$ be the subgroup of $\mathrm{Cl}^0(F)$ which is generated by the divisor classes
$[P_i -P_0]$ $(1\le i \le n-1)$ and let $\mathrm{Aut}(G)^*$ be the group
of all automorphisms of $G$ which permute the divisor classes
$[P_i -P_0]$ $(1\le i \le n-1)$. From Theorem \ref{auto1} it follows that
$$\mathrm{Aut}(\mathrm{Cl}^0(F))^*\cong \Aut(L_\calP) \cap S_{n-1}.$$

\begin{thm}\label{autH}
Let $H$ be a Hermitian function field.
Then the  group  $\mathrm{Aut}(H)$ is isomorphic to a  subgroup of  $\mathrm{Aut}(\mathrm{Cl}^0(H))$.
\end{thm}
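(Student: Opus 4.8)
The plan is to obtain the asserted embedding as a short composition of maps already constructed in this section, so that the only substantive new input is one structural fact about Hermitian curves.

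First I would record that for a Hermitian function field $H/\mathbb{F}_{q^2}$ the subgroup $G$ of $\mathrm{Cl}^0(H)$ generated by the classes $[P_i-P_0]$, $1\le i\le n-1$ with $n=q^3+1=\#\calP$, is in fact all of $\mathrm{Cl}^0(H)$. This is precisely the point invoked in the proof of the determinant formula in Section~\ref{det_sect}: by \cite{hess} the classes $[P-Q]$ with $P,Q\in\calP$ generate $\Div_0(\calP)/\mathrm{Princ}(\calP)$, and for the Hermitian curve this latter group is the whole divisor class group $\mathrm{Cl}^0(H)$. Hence $G=\mathrm{Cl}^0(H)$, and the group $\mathrm{Aut}(\mathrm{Cl}^0(H))^*$ of automorphisms of $\mathrm{Cl}^0(H)$ permuting the classes $[P_i-P_0]$ is, by its very definition, a subgroup of $\mathrm{Aut}(\mathrm{Cl}^0(H))$.

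Next I would chain the facts established just above the statement. Every $\sigma\in\mathrm{Aut}(H)$ permutes the rational places of $H$ and so induces an automorphism of the lattice $L_\calP$; since an automorphism of $H$ fixing every rational place must be the identity (the $\mathbb{F}_{q^2}$-rational points of the Hermitian curve are not contained in any line, so $\mathrm{Aut}(H)\hookrightarrow\mathrm{PGL}_3(\mathbb{F}_{q^2})$ acts faithfully on them), this yields an injective homomorphism $\mathrm{Aut}(H)\hookrightarrow\mathrm{Aut}(L_\calP)\cap S_{n-1}$. On the other hand, because $S=\{[P_i-P_0]:0\le i\le n-1\}$ generates $G=\mathrm{Cl}^0(H)$, Theorem~\ref{auto1} provides an isomorphism $\mathrm{Aut}(L_\calP)\cap S_{n-1}\cong\mathrm{Aut}(\mathrm{Cl}^0(H))^*$. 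Composing the injection with this isomorphism and with the inclusion $\mathrm{Aut}(\mathrm{Cl}^0(H))^*\le\mathrm{Aut}(\mathrm{Cl}^0(H))$ from the first step exhibits $\mathrm{Aut}(H)$ as isomorphic to a subgroup of $\mathrm{Aut}(\mathrm{Cl}^0(H))$, which is the claim.

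The write-up itself should be brief; the real content lies in the two borrowed ingredients: faithfulness of the $\mathrm{Aut}(H)$-action on the rational places (for the first injection) and, more importantly, the fact that the degree-zero classes supported on $\calP$ already exhaust $\mathrm{Cl}^0(H)$ — the role of \cite{hess} — without which one would land inside $\mathrm{Aut}(G)$ only for a proper subgroup $G\subsetneq\mathrm{Cl}^0(H)$ and the conclusion would fail. I expect this second point to be the place to be careful: it is exactly what makes the Hermitian case clean, since it permits Theorem~\ref{auto1} to be applied to a generating set of the \emph{full} class group. (A more direct route — arguing that an automorphism acting trivially on $\mathrm{Cl}^0(H)$ must fix every rational place, whence it is the identity since $H$ has positive genus — is conceivable, but pinning this down seems to require the same generation input to get started, so routing through Theorem~\ref{auto1} is the economical choice.)
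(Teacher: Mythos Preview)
Your route through Theorem~\ref{auto1} is different from the paper's, and it has a genuine gap. The problem is the claimed injection $\mathrm{Aut}(H)\hookrightarrow\mathrm{Aut}(L_\calP)\cap S_{n-1}$. An automorphism $\sigma\in\mathrm{Aut}(H)$ permutes all $n$ rational places and hence acts on $L_\calP\subset\zed^n$ as a coordinate permutation in $S_n$; but in this section $S_{n-1}$ is explicitly the group of permutation matrices in $\GL_{n-1}(\zed)$, i.e., those permutations that \emph{fix the coordinate attached to $P_0$}. Since $\mathrm{Aut}(H)$ acts transitively on the rational places, most of its elements move $P_0=Q_\infty$, and the resulting lattice automorphism is \emph{not} in $S_{n-1}$ (for instance, the swap $P_0\leftrightarrow P_1$ becomes, in the $(x_1,\dots,x_{n-1})$ coordinates, the map $x_1\mapsto -\sum_i x_i$, which is not a permutation matrix). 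So your chain only covers the stabilizer $\mathrm{Aut}(H)^*$ of $P_0$, a subgroup of index $q^3+1$, and Theorem~\ref{auto1} does not reach the rest. You may have been led here by the sentence ``we may regard $\mathrm{Aut}(F)$ as a subgroup of $\mathrm{Aut}(L_\calP)\cap S_{n-1}$'' just before the theorem; that line is imprecise and should really say $S_n$.

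The paper bypasses the lattice entirely: it sets $\phi_\sigma([D])=[\sigma(D)]$ directly on $\mathrm{Cl}^0(H)$ and shows the homomorphism $\phi:\mathrm{Aut}(H)\to\mathrm{Aut}(\mathrm{Cl}^0(H))$ is injective. Injectivity comes from Theorem~\ref{mindist}: if $\phi_\sigma$ is trivial then each $\sigma(P_i)-P_i+P_0-\sigma(P_0)$ is principal of degree below the minimum $q$, hence zero, which (after ruling out a residual transposition via a divisor calculation with $x-\gamma$) forces $\sigma$ to fix every rational place. This is essentially your parenthetical ``more direct route'', and it is the economical choice precisely because it never needs a basepoint and so does not get stuck at $\mathrm{Aut}(H)^*$.
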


\begin{proof}
We write $P_0$ for the place $Q_\infty$. In this proof, the remaining places of $H$ are
$P_1,P_2,  \ldots, P_{n-1}$ where $n = q^3+1$.
Put $G = \mathrm{Cl}^0(H)$.
If $\sigma \in \mathrm{Aut}(H)$, then one can define a map $\phi_\sigma: G \to G$ by
$\phi_\sigma( [\sum_P a_P P] ) = [\sum_P a_P \sigma(P)]$. This map is well-defined:
two degree zero divisor classes  $ D_1 := [\sum_P a_P P], D_2 :=  [\sum_P b_P P]$ are equal
if and only if
$[\sum_P  (a_P - b_P) P]$ is the zero divisor class, that is,  for some function $f$,
$(f) = \sum_P (a_P - b_P) P$.  This is equivalent to
$(\sigma^{-1}(f)) = \sum_P  (a_P - b_P) \sigma(P)$, that is,
to $[\sum_P  (a_P - b_P) \sigma(P)]$ being the zero divisor class.
This is in turn tantamount to saying that $[\sum_P a_P \sigma(P)] = [\sum_P b_P \sigma(P)]$, that is
$\phi_\sigma(D_1) = \phi_\sigma(D_2)$.
It follows from this argument that $\phi_\sigma$ is
well-defined and injective. Since $G$ is finite,  $\phi_\sigma$ is also
surjective. Moreover, $\phi_\sigma$ is a group homomorphism and hence an automorphism of $G$.
Thus we have a  map $\phi:  \mathrm{Aut}(H) \to \mathrm{Aut}(G)$ given by $\sigma \mapsto \phi_\sigma$.
It is quickly checked that $\phi$ is a homomorphism.

Next we show that $\phi$ is injective.
Suppose that $\phi_\sigma$ is trivial for some $\sigma \in \mathrm{Aut}(H)$. Then, for $1\le i \le n$,
  $\phi_\sigma([P_i- P_0]) = [P_i-P_0]$, that is,
 $\sigma(P_i) -P_i+P_0 - \sigma(P_0)$ is principal, and thus the divisor  of a function of degree
 at most $4$. Since $q>2$, according to Theorem~\ref{mindist}, this function must have degree 0. This implies
 that $\sigma(P_i) = P_i$  and $\sigma(P_0) = P_0$, or
 $\sigma(P_i) = P_0$  and $\sigma(P_0) = P_i$  for $1\le i \le n-1$.
 Thus, $\sigma$ is either the identity of $\mathrm{Aut}(F)$ or there is exists an index $j$ $(1\le j \le n-1)$ such
 that $\sigma(P_j) =  P_0$ and $\sigma(P_i) = P_i$ for $i\ne j$ where $1\le i \le n-1$.
 Suppose the latter is the case and that $P_j = P_{\alpha,\beta}$ where $(\alpha,\beta) \in \calK$.
Using Lemma~\ref{tauram},  we obtain that for any $\gamma \in \Fqsq$ such that $\gamma \ne \alpha$, we have
$(x-\gamma)          = \sum_\rho P_{\gamma, \rho}  - q Q_\infty$
and
$( \sigma(x-\gamma) ) = \sum_\rho P_{\gamma, \rho}   - q P_{\alpha,\beta}$
where the sums are over all $\rho \in \Fqsq$ such that $(\gamma,\rho) \in \calK$.
Thus the divisor of $ (x-\gamma)/\sigma(x-\gamma) $ is $P_{\alpha,\beta} - Q_\infty$ and this contradicts
Theorem~\ref{mindist}.  Consequently, $\sigma$ must the be the identity of $\mathrm{Aut}(H)$ and
the map $\phi$ is injective. \end{proof}

\begin{thm}
Let $\mathrm{Aut}(F)^*$ be the group of all automorphisms of $F$ which fix the place $P_0$.
Suppose that $F$ is not the rational function field.
Then the  group  $\Aut(F)^*$ is isomorphic to a  subgroup of  $\mathrm{Aut}(\mathrm{Cl}^0(F))^*$.
\end{thm}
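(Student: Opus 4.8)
The plan is to mimic the argument of Theorem~\ref{autH}, which handled the Hermitian case, but to extract from it the precisely two features that were actually needed: first, that one has a well-defined injective homomorphism $\phi : \mathrm{Aut}(F) \to \mathrm{Aut}(\mathrm{Cl}^0(F))$ sending $\sigma$ to the map $[\sum_P a_P P] \mapsto [\sum_P a_P \sigma(P)]$, and second, a device for ruling out the ``nontrivial kernel'' possibilities. For the present statement we restrict $\phi$ to $\mathrm{Aut}(F)^*$, the subgroup of automorphisms fixing $P_0$. So first I would repeat verbatim the well-definedness and homomorphism part of the proof of Theorem~\ref{autH}: for any $\sigma\in\mathrm{Aut}(F)$, the map $\phi_\sigma$ is well-defined because $(f)=\sum_P(a_P-b_P)P$ is equivalent to $(\sigma^{-1}(f))=\sum_P(a_P-b_P)\sigma(P)$; it is a group homomorphism on $\mathrm{Cl}^0(F)$, injective because $\sigma$ is a bijection on places, hence (as $\mathrm{Cl}^0(F)$ is finite) an automorphism of $\mathrm{Cl}^0(F)$; and $\sigma\mapsto\phi_\sigma$ is itself a group homomorphism $\mathrm{Aut}(F)\to\mathrm{Aut}(\mathrm{Cl}^0(F))$.

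Next I would observe that if $\sigma\in\mathrm{Aut}(F)^*$, so $\sigma(P_0)=P_0$, then $\phi_\sigma$ permutes the divisor classes $[P_i-P_0]$ for $1\le i\le n-1$: indeed $\phi_\sigma([P_i-P_0])=[\sigma(P_i)-\sigma(P_0)]=[\sigma(P_i)-P_0]$, and $\sigma$ permutes the rational places $P_1,\dots,P_{n-1}$ among themselves since it fixes $P_0$. Hence $\phi_\sigma\in\mathrm{Aut}(\mathrm{Cl}^0(F))^*$, and we have a homomorphism $\phi:\mathrm{Aut}(F)^*\to\mathrm{Aut}(\mathrm{Cl}^0(F))^*$.

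It remains to show this restricted $\phi$ is injective, and here is where the hypothesis ``$F$ is not the rational function field'' enters. Suppose $\sigma\in\mathrm{Aut}(F)^*$ lies in the kernel, so $\phi_\sigma([P_i-P_0])=[P_i-P_0]$ for all $i$; then $\sigma(P_i)-P_i$ is a principal divisor for each $1\le i\le n-1$ (using $\sigma(P_0)=P_0$). A nonzero principal divisor of the form $\sigma(P_i)-P_i$ would be the divisor of a function $f$ with $[F:\FF_q(f)]=1$, i.e.\ $f$ a generator, forcing $F=\FF_q(f)$ to be the rational function field, contrary to hypothesis. Therefore $\sigma(P_i)=P_i$ for every $i$, and since $\sigma$ already fixes $P_0$ and permutes the rational places, $\sigma$ fixes every rational place of $F$. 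An automorphism of a function field that fixes more than $2g+2$ rational places (in fact fixing three distinct rational places already pins down an automorphism in essentially all cases, but the clean statement is: the fixed field of $\langle\sigma\rangle$ would have too many rational places relative to its degree) must be the identity; more simply, since $F$ is not rational it has genus $g\ge 1$ and $\mathrm{Aut}(F)$ acts faithfully on the set of rational places whenever there are at least... — here I would invoke the standard fact that an automorphism of a function field of genus $g\ge 1$ fixing all rational places is the identity (this follows because such $\sigma$ acts trivially on $\mathrm{Cl}^0(F)$ and on a suitable space of rational functions, e.g.\ via Hurwitz-type bounds, or directly since any rational place together with the group structure of the Jacobian determines the embedding). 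Thus $\ker\phi$ is trivial and $\phi$ is injective, which gives the desired isomorphism onto its image.

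The main obstacle is the last step: going from ``$\sigma$ fixes every rational place'' to ``$\sigma=\mathrm{id}$''. In Theorem~\ref{autH} this was done by an explicit computation with the lines $x-\gamma$ and a contradiction against the minimum-distance bound $d=\sqrt{2q}>\sqrt{8}$, which used $q>2$; for a general non-rational $F$ one does not have such explicit functions, so I expect to need to cite a standard result on the faithfulness of the action of $\mathrm{Aut}(F)$ on rational places for $g\ge 1$ (or, if there happen to be too few rational places for that to apply directly, to argue via the action on $\mathrm{Cl}^0(F)$ together with a base point, using that $\mathrm{Cl}^0$ is finite and $\sigma$ acts trivially on it by what we have shown, combined with a non-rational $F$ having a canonical or other distinguished divisor class that rigidifies the situation). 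This is the step I would flag as requiring care, and where a precise reference — or the mild extra hypothesis that $F$ has enough rational places — would make the argument airtight.
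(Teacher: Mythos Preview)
Your proposal follows essentially the same route as the paper: define $\phi_\sigma$ on $\mathrm{Cl}^0(F)$, check it is a well-defined automorphism, observe that for $\sigma\in\Aut(F)^*$ it permutes the classes $[P_i-P_0]$, and then prove injectivity by noting that $\sigma(P_i)-P_i$ principal forces $\sigma(P_i)=P_i$ since $F$ is not rational. The only divergence is at the very step you flagged as the obstacle. Rather than invoking Hurwitz-type bounds or faithfulness of the action on rational places, the paper finishes more directly: once $\sigma$ fixes every $P_i$, one has $(\sigma(f))=(f)$ for every $f\in\calO_\calP^*$, so $\sigma(f)=c\cdot f$ for a constant $c$; since $\sigma(1)=1$, the paper concludes $\sigma$ is the identity. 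So the paper's device is ``fixed divisor $\Rightarrow$ scalar multiple $\Rightarrow$ identity via $\sigma(1)=1$'', which avoids any appeal to bounds on the number of fixed places and does not require counting rational places against the genus. Your alternative via faithfulness would also work when $n$ is large enough, but it introduces an extra hypothesis the paper's argument does not need.
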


\begin{proof}
Put $G = \mathrm{Cl}^0(F)$ and $A = \mathrm{Aut}(F)^*$.
If $\sigma \in \mathrm{Aut}(F)$, then one can define a map $\phi_\sigma: G \to G$ by
$\phi_\sigma( [\sum_P a_P P] ) = [\sum_P a_P \sigma(P)]$.
As in the proof of Theorem~\ref{autH},  this gives rise to a homomorphism
 $\phi:  A\to \mathrm{Aut}(G)$ via $\sigma \mapsto \phi_\sigma$.
 Next we show that $\phi$ is injective.
Suppose that $\phi_\sigma$ is trivial for some $\sigma \in A$. Then, for $1\le i \le n$,
  $\phi_\sigma([P_i- P_0]) = [P_i-P_0]$, that is,
 $\sigma(P_i) -P_i$ is a principal divisor.
 Since $F$ is not the rational function field,
 $\sigma(P_i) = P_i$  for $1\le i \le n$.
 This implies that there exists a constant $c$ such that  $\sigma(f) = c \cdot f$  for all $f \in F$.
As $\sigma (1)  = 1$, it follows that $\sigma$ is the identity of $A$.

Since $\phi_\sigma([P_i - P_0]) = [\sigma(P_i) - P_0]$ for $1\le i \le n-1$ is a permutation of
the divisor classes $[P_i-P_0]$ $(1\le i \le n-1)$, it follows that
 $\phi$ is in fact a homomorphism from $A$ to  $\mathrm{Aut}(G)^*$.
\end{proof}

 In the case of the Hermitian function field,
the automorphism group $\mathrm{Aut}(H)$ is well understood, see \cite[Page 238]{stich}.
The subgroup $\mathrm{Aut}(H)^*$ has order  $q^3(q^2-1)$ and acts
transitively on the places $P_{\alpha, \beta}$, $(\alpha, \beta) \in \calK$.
Furthermore, the divisor class group of $H$ is $\ZZ_{q+1}^{q^2-q}$.

\section{A lower bound on the number of minimal vectors}
\label{number_min_vecs}

\begin{thm}\label{kiss}
The lattice contains at least $q^7 - q^5 +q^4-q^2$ minimal lattice vectors.
\end{thm}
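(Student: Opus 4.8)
The plan is to count minimal vectors coming from quotients of lines, using the explicit descriptions from Lemma~\ref{tauram} and the characterization of minimal vectors in Lemma~\ref{minv}. By Theorem~\ref{mindist}, a minimal vector has $\ell_1$-norm $2q$, so it corresponds (up to sign) to a function whose divisor has $q$ zeros and $q$ poles, each of multiplicity one, among the $q^3+1$ rational places. By Lemma~\ref{minv}, whenever $L_1,L_2$ are two distinct lines that meet the Hermitian curve in exactly one common point and neither is a tangent line at a point of $\calK$ (the relevant situations being: both of the form $x-\alpha$; one of that form and one non-tangent; or both non-tangent with a common point), the divisor $(L_1/L_2)$ is minimal. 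The strategy is to produce a large family of such pairs, count them, and then account for the overcounting caused by different pairs giving the same lattice vector.

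First I would count the minimal vectors arising from pairs $\{x-\alpha, x-\alpha'\}$ with $\alpha\neq\alpha'$ in $\Fqsq$: by Lemma~\ref{tauram}(e) the divisor of $x-\alpha$ is $\sum_\rho P_{\alpha,\rho} - qQ_\infty$, so $(x-\alpha)/(x-\alpha')$ has $\ell_1$-norm $2q$ and is a minimal vector (the $Q_\infty$ terms cancel). This gives $\binom{q^2}{2}$ unordered pairs, hence $q^2(q^2-1)$ signed minimal vectors, though one must check these are genuinely distinct from the others and among themselves, which is immediate since the support determines the vector. Next, and this is where the bulk of the count comes from, I would count minimal vectors of the form $(\tau-d_i)/(x-\alpha-\zeta^i e)$ as they appear in Case~2 of the proof of the main theorem: for each non-tangent line $L=\tau_{\alpha,\beta}-d$ (equivalently each pair $(\alpha,\beta)\in\calK$ together with a suitable $d$ of nonzero trace), equation~(\ref{maintaueqn}) exhibits $q$ such minimal vectors. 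The combinatorial heart is to enumerate the number of non-tangent lines through a given point, or more efficiently to directly enumerate triples consisting of a non-tangent line $\ell$ of the form $y+bx+c$, a point $P\in\calK$ on $\ell$, and a second line through $P$ meeting $\ell$ only at $P$ — and then convert this into a count of distinct lattice vectors.

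Concretely, I expect the cleanest route is: fix a point $P=(\alpha,\beta)\in\calK$; count the non-tangent lines through $P$ of the form $y+bx+c$ (there are $q^2-q$ values of the slope $b$ for which the line is non-tangent, out of $q^2$ total, by Lemma~\ref{tauram}(d)); for each ordered pair of distinct such lines through $P$ that meet only at $P$, form the quotient, which is minimal by Lemma~\ref{minv}. One then sums over all $q^3$ points $P\in\calK$ and carefully divides by the multiplicity with which each minimal vector is produced — each minimal vector of this ``two non-tangent lines'' type has exactly one common zero $P$, so it is counted once per choice of $P$, but both orderings of the pair give a vector and its negative. Combining the $x-\alpha$ family and the non-tangent family, and checking that the supports make the two families disjoint, the arithmetic should collapse to $q^7-q^5+q^4-q^2 = q^2(q^2-1)(q^3+1) = q^2(q^2-1)+q^2(q^2-1)\cdot q^3$, which suggests the intended decomposition is exactly ``$q^2(q^2-1)$ from vertical-line quotients, plus $q^3$ points each contributing $q^2(q^2-1)/\text{(symmetry factor)}$ from the non-tangent configuration.''

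The main obstacle is the bookkeeping of distinctness and multiplicity: one must be sure that no minimal vector is counted twice, either within a family or across families. For this I would use the rigidity of the divisor-to-vector correspondence — a minimal vector is determined by its $q$ zeros and $q$ poles — together with the geometric fact from Lemma~\ref{tauram} that a non-tangent line meets $\calK$ in exactly $q+1$ points while a vertical line $x-\alpha$ meets it in $q$ points with a pole of order $q$ at $Q_\infty$, so the pole structure at $Q_\infty$ distinguishes the cases. The secondary subtlety is verifying that for a fixed $P$, ``most'' pairs of non-tangent lines through $P$ really do meet only at $P$ (a second intersection point would have to lie on both lines, and two distinct lines through $P$ can share at most the other $\leq q$ points dictated by the curve); bounding the number of ``bad'' pairs from below-enough to still reach the stated total, rather than computing it exactly, is the pragmatic way to keep the argument short, since the theorem only claims a lower bound.
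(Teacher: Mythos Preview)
Your approach --- counting quotients $L_1/L_2$ according to the three cases of Lemma~\ref{minv} and summing --- is exactly what the paper does, and the factorization $q^7-q^5+q^4-q^2=q^2(q^2-1)(q^3+1)$ you noticed is the one that drops out. Two concrete points in your write-up need fixing, though.

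First, the number of non-tangent lines of the form $y+bx+c$ through a fixed $P=(\alpha,\beta)\in\calK$ is $q^2-1$, not $q^2-q$. Forcing the line through $P$ gives $c=-\beta-b\alpha$, and the tangency condition $b^{q+1}=c^q+c$ from Lemma~\ref{tauram}(d) becomes $(b+\alpha^q)(b^q+\alpha)=0$, which has the single solution $b=-\alpha^q$ in $\Fqsq$. With the correct count the three cases give $q^2(q^2-1)$ (two vertical lines), $2q^3(q^2-1)$ (one vertical, one non-tangent through a common $P\in\calK$; you allude to this case but never fold it into the total), and $q^3(q^2-1)(q^2-2)$ (two non-tangent lines through a common $P$), and these add to $q^2(q^2-1)(q^3+1)$.

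Second, your closing worry is empty: two \emph{distinct} lines in the affine plane meet in at most one point, so any two distinct lines through $P$ automatically meet only at $P$; there are no ``bad pairs'' to exclude. The distinctness bookkeeping that \emph{is} needed --- that different ordered pairs $(L_1,L_2)$ from the three cases give different lattice vectors --- is straightforward: the mixed quotients are the only ones with $Q_\infty$ in their support, and a vertical--vertical quotient has all its zeros sharing one $x$-coordinate while a non-tangent--non-tangent quotient does not, so the three families are disjoint; within each family the zero and pole sets recover the two lines uniquely.
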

\begin{proof}
We count the number of functions of the form
$f = L_1/L_2$ where $L_1,L_2$ are lines which satisfy the conditions given in Lemma \ref{minv} for
$(f)$ to be a minimal vector.  We consider each of the cases listed in Lemma \ref{minv}.

{\em Case 1:  $L_1$ and $L_2$ are of the form $x-\alpha$.}
There are $q^2(q^2-1)$ functions of the of this form.

{\em Case 2:  One of $L_1, L_2$  is of the form $x-\alpha$ and the other is a non-tangent line {\rm (}of the form $y + ax + c${\rm )} and both lines have exactly one point of intersection.}
Suppose that $(a,b) \in \calK$ is the point of intersection. Then the lines $L_1 = x-a$ and $L_2 = y-b - m(x-a)$ are two lines of the
required form provided that $m \in \Fqsq$ such that $m\ne a^q$ (by Lemma \ref{tauram}). Thus there are $q^3(q^2-1)$
possibilities for $f$. Since the function $1/f$ gives the lattice vector $-(f)$, we obtain $2q^3(q^2-1)$
minimal lattice vectors in this way.

{\em Case 3:  Both $L_1$ and $L_2$ are non-tangent lines {\rm (}of the form $y+ax+c${\rm )} with a common point of intersection.}
Suppose that $(a,b) \in \calK$ is given. Then the lines $L_1 = y-b - m_1(x-a)$ and $L_2 = y-b - m_2(x-a)$ are two lines of the
required form provided that $m_1,m_2$ are distinct elements of  $\Fqsq$ neither of which is
equal to $a^q$ (by Lemma \ref{tauram}). These are  $q^3(q^2-1)(q^2-2)$ possibilities for the function $f$.

Adding the numbers of minimal vectors obtained from each of the above cases yields the desired result.
\end{proof}

Here is an alternative proof of the above result.
Let $\sigma\in \mathrm{Aut}(H)$.  If $L_1,L_2$ satisfy
the conditions of Lemma \ref{minv} then one can check that $\sigma(L_1/L_2)$ is of the form $c \cdot L_1'/L_2'$ where
$L_1',L_2'$ are again a pair of lines which satisfy one of the conditions of Lemma \ref{minv} and $c$ is a nonzero constant.  Thus,
if we let $T$ be the collection of all functions of the form $c \cdot L_1/L_2$ where  $L_1,L_2$ satisfy
the conditions of Lemma \ref{minv} and $c$ is a nonzero constant, then the group $\mathrm{Aut}(H)$ acts on the set $T$.
 Let $a, b$ be two distinct elements of $\Fqsq$. Then
the function $f := (x-a)/(x-b)$ belongs to $T$.  We show that the orbit of $f$ under the action of
$\mathrm{Aut}(H)$ has $q^2(q^2-1)(q^3+1) = q^7-q^5+q^4-q^2$ elements.
Let $\sigma \in \mathrm{Aut}(H)$. Then  $\sigma(f) = f$
if and only if   $(\sigma(x) - a)/(\sigma(x) - b) = (x-a)/(x-b)$
if and only if   $\sigma(x) =  x$
if and only if $\sigma$ belongs to the Galois group of the extension $H/\Fqsq(x)$, which has order $q$.
Since $\left| \mathrm{Aut}(H) \right| = q^3(q^2-1)(q^3+1)$ (see \cite[Page 238]{stich}),  Theorem~\ref{kiss} follows. A corollary of the above argument is that the group $\Aut(H)$ acts transitively on the set $T$.

\medskip

\bigskip
\footnotesize{{\sc  Fakult\"at f\"ur Mathematik, TU Chemnitz, 09107 Chemnitz, Germany,
{\em E-mail address:} {\tt aboettch@mathematik.tu-chemnitz.de}}

\smallskip
{\sc Department of Mathematics, Claremont McKenna College, 850 Columbia Ave, Claremont, CA 91711, USA, {\em E-mail address: {\tt lenny@cmc.edu}}}

\smallskip
{\sc Department of Mathematics, Pomona College, 610 N. College Ave, Claremont, CA 91711, USA, {\em E-mail address: {\tt stephan.garcia@pomona.edu}}}

\smallskip
{\sc Department of Mathematics, Claremont McKenna College, 850 Columbia Ave, Claremont, CA 91711, USA, {\em E-mail address: {\tt hmahara@g.clemson.edu}}}}

\end{document}